% Remember to rename this template to something unique to your article.
% We recommend using the article ID assigned to your article in EditFlow if applicable.

% Use this AIMS template to prepare your tex file after your article is accepted by an AIMS journal.
% Read all information including that which is proceeded by a \% sign. These are important instructions and explanations.
% Please refer to the Template FAQ for more information.
% Thank you for your cooperation.

%%%%%%%%%%%%%%%%%%%%%%%%%%%%%%%%%%%%%%%%%%%%%%%%%%%%%%%%
%                    1. PREAMBLE
%%%%%%%%%%%%%%%%%%%%%%%%%%%%%%%%%%%%%%%%%%%%%%%%%%%%%%%%
\documentclass{cpaaEA} % Use the aims.cls file to compile your paper
\usepackage{amsmath,amssymb}
\usepackage{paralist}
\usepackage[misc]{ifsym}
\usepackage{epsfig} %For pictures: screened artwork should be set up with an 85 or 100 line screen
\usepackage{epstopdf} %This is to transfer .eps figure to .pdf figure; please compile your article using PDFLaTex or PDFTeXify.
\usepackage[colorlinks=true]{hyperref}
   % Warning: when you first run your tex file, some errors might occur,
   % please just press enter key to end the compilation process, then it will be fine if you run your tex file again.
   % Note that it is highly recommended by AIMS to use this package.
\hypersetup{urlcolor=blue, citecolor=red}
\allowdisplaybreaks

\usepackage{esint}
\usepackage{bm}
\usepackage{xfrac}
\usepackage{comment}

% Do not change or remove the page specifications below.
\textheight=8.2 true in
 \textwidth=5.0 true in
  \topmargin 30pt
   \setcounter{page}{1}

% The next 6 lines will be entered by AIMS production staff.

\numberwithin{equation}{section}
\allowdisplaybreaks[4]
\newcommand{\xqedhere}[2]{%
	\rlap{\hbox to#1{\hfil\llap{\ensuremath{#2}}}}}

%%%%%%%%%%%%%%%%%%%%%%%%%%%%%%%%%%%%%%%%%%%%%%%%%%%%%%%%
%          2. CUSTOM COMMANDS
%%%%%%%%%%%%%%%%%%%%%%%%%%%%%%%%%%%%%%%%%%%%%%%%%%%%%%%%
% PLEASE NOTE: The AIMS cls file is updated regularly with the standard AMS usepackages - you do NOT need to insert custom commands for AMS symbols,

% Insert your custom commands in this section.
% Please minimize the use of "newtheorem", "newcommand", and use equation numbers only in situations where they provide essential convenience.
% Use \usepackage{amssymb}, \usepackage{amsthm}, etc, but please do not define individual mathematical symbols unless it is completely necessary.
% Try to avoid defining your own macros.

% Do not change or remove the commands below.
\newtheorem{theorem}{Theorem}[section]
\newtheorem{corollary}[theorem]{Corollary}

\newtheorem{lemma}[theorem]{Lemma}
\newtheorem{proposition}[theorem]{Proposition}

\theoremstyle{definition}

\newtheorem{remark}[theorem]{Remark}

% Blackboard bold
\newcommand{\R}{\mathbb{R}}
\newcommand{\C}{\mathbb{C}}

\newcommand{\Z}{\mathbb{Z}}
\newcommand{\T}{\mathbb{T}}

% Calligraphic

% Various operators
\newcommand{\p}{\partial}

\newcommand{\les}{\lesssim}

 %weak convergence

\let\div\relax
\DeclareMathOperator{\div}{div}

\DeclareMathOperator{\curl}{curl}

\let\Re\relax
\DeclareMathOperator{\Re}{Re}
\let\Im\relax
\DeclareMathOperator{\Im}{Im}

\let\tilde\relas
\newcommand{\tilde}[1]{\widetilde{#1}}
\DeclareMathOperator*{\esssup}{ess\,sup}

%%%%%%%%%%%%%%%%%%%%%%%%%%%%%%%%%%%%%%%%%%%%%%%%%%%%%%%%
%         3. HEADER AND FOOTER SECTION
%%%%%%%%%%%%%%%%%%%%%%%%%%%%%%%%%%%%%%%%%%%%%%%%%%%%%%%%

% Place the running head in [], and the full title of the article in {}.
\title[Remarks on the complex Euler equations]
% Running head is the full title or shortened version of the full title. This will appear at the top of odd pages. It should be no more than 40 characters to fit within the width limit.
{Remarks on the complex Euler equations} % Only the first word and proper nouns should be capitalized.

% Place all authors' names in []. This will be shown as the running head on even pages. Leave {} empty.
% Please use `and' to connect the last two authors' names if applicable.
% List full names if possible. If all authors' full names will not fit, use FirstNameInitial. MiddleNameInitial. LastName, only last names, or full names of first few authors, et al.
\author[Dallas Albritton and W. Jacob Ogden]{}

% 2020 MSC numbers are required.
\subjclass{Primary: 35Q31, 35Q35; Secondary: 37L40, 37D10.}
% Please provide a minimum of 5 keywords or phrases.
\keywords{Complex Euler equations, Euler equations, Euler-Arnold equations, ill-posedness, unstable manifold}

% Put your short thanks below. For thanks/acknowledgments over 30 words, please place them in \section*{Acknowledgments} located above the reference section.
% Remove \thanks{The first author is supported by NSF grant xx-xxxx} if nothing is added here.
\thanks{DA was supported by National Science Foundation Postdoctoral Fellowship Grant No.\ 2002023. WJO was supported by National Science Foundation Graduate Research Fellowship Grant No.\ DGE-2140004.}

% Add corresponding author in the footnote of the first page if necessary.
% Add $^*$ adjacent to the corresponding author's name on the first page if necessary.
% The corresponding author in your article should match the corresponding author listed for your article in EditFlow (if applicable).
% In the example shown below, the first author is the corresponding author. Please move or remove $^*$ as needed for your article.
\thanks{$^*$Corresponding author: Dallas Albritton}

%%%%%%%%%%%%%%%%%%%%%%%%%%%%%%%%%%%%%%%%%%%%%%%%%%%%%%%%%%
%      4. AUTHOR NAMES/ADDRESSES/AFFILIATIONS SECTION
%%%%%%%%%%%%%%%%%%%%%%%%%%%%%%%%%%%%%%%%%%%%%%%%%%%%%%%%%%

\begin{document}
\maketitle

% Enter the first author's name and email address; email addresses are required for each author.
% Use footnote notations to indicate address and affiliations with commas between numbers if more than one address applies; see below for examples.
\centerline{\scshape
Dallas Albritton$^{{\href{mailto:dalbritton@wisc.edu}{\textrm{\Letter}}}*1}$
and W. Jacob Ogden$^{{\href{mailto:wjogden@uw.edu}{\textrm{\Letter}}}1,2}$}

\medskip

{\footnotesize
% Enter the full affiliation and country name:
% Do not insert commas or periods at the end of lines.
 \centerline{$^1$Department of Mathematics, University of Wisconsin--Madison, USA}
} % Do not forget to end {\footnotesize with the sign }

\medskip

{\footnotesize
 % Enter the full affiliation and country name:
 \centerline{$^2$Department of Mathematics, University of Washington, USA}
}

\bigskip

\dedicatory{\small{To Vladim{\'i}r \v{S}ver{\'a}k, on the occasion of his 65th birthday, with gratitude and admiration}}

% The name of the handling editor will be entered by AIMS production staff.
% "Communicated by Handling Editor" is not needed for special issue.
% \centerline{(Communicated by Handling Editor)}

%%%%%%%%%%%%%%%%%%%%%%%%%%%%%%%%%%%%%%%%%%%%%%%%%%%%%%%
%             5. ABSTRACT
%%%%%%%%%%%%%%%%%%%%%%%%%%%%%%%%%%%%%%%%%%%%%%%%%%%%%%%

\begin{abstract}
We consider a complexification of the Euler equations introduced by \v{S}ver{\'a}k in~\cite{VS} which conserves energy. We prove that these complex Euler equations are nonlinearly ill-posed below analytic regularity and, moreover, we exhibit solutions which lose analyticity in finite time. Our examples are complex shear flows and, hence, one-dimensional. This motivates us to consider fully nonlinear systems in one spatial dimension which are non-hyperbolic near a constant equilibrium. We prove nonlinear ill-posedness and finite-time singularity for these models. Our approach is to construct an infinite-dimensional unstable manifold to capture the high frequency instability at the nonlinear level.
\end{abstract}

%\tableofcontents

\section{Introduction}

We consider the Cauchy problem for the \emph{complex Euler equations}
\begin{equation} \label{eq:ComplexEuler}
\tag{CE}
\left\lbrace
\begin{aligned}
    \partial_t u + \bar{u} \cdot \nabla u + (\nabla \bar{u})^T u + \nabla \pi &= 0 \\
    \div u &= 0
    \end{aligned} \right.
\end{equation}
on the $d$-dimensional torus $\T^d = (\R/2\pi \Z)^d$, $d \geq 2$. The velocity field $u : \T^d \times I \to \C^d$ is complex-valued. When $u = \bar{u}$, we have $(\nabla \bar{u})^T u = \sfrac  12 \nabla |u|^2$. Since this term may be absorbed into the pressure gradient, the system \eqref{eq:ComplexEuler} is a direct generalization of the Euler equations for ideal incompressible fluids. It was introduced by \v{S}ver{\'a}k in~\cite{VS} as a natural complexification preserving the geometric structure. % in the following way.

It is well known that the Euler equations can be viewed as geodesic equations on the group ${\rm SDiff}(M)$ of volume-preserving diffeomorphisms of a compact manifold $M$~\cite{Arnold,EbinMarsden}. The Lie algebra (tangent space at the identity) of ${\rm SDiff}(M)$ is the space of divergence-free vector fields. The group is further endowed with a right-invariant metric which, when restricted to the Lie algebra, is simply $\int u \cdot v \, dx$.

Beginning from this geometric point of view, the Euler equations can be derived from the structure of the Lie algebra of ${\rm SDiff}(M)$. The procedure is as follows:

Let $\mathsf{L}$ be a real Lie algebra with Lie bracket $[\cdot , \cdot ]$ and an inner product $(\cdot, \cdot  )$. Define a bilinear form $B: \mathsf{L} \times \mathsf{L} \to \mathsf{L}$, called the \emph{Arnold form}, by duality:
\begin{equation} 
( B(u,v ), w ) = (u, [v,w]) \, .
\end{equation} 
The \emph{Euler-Arnold equation} is 
\begin{equation} \label{eq:EulerArnold}
\p_t u = B ( u, u ) \, .
\end{equation}
It is the equation, written in the tangent space at the identity, for \emph{geodesics on a Lie group with right-invariant metric}, as described, for example, in~\cite{TaoEulerArnold}.
It is easy to verify that the energy $(u,u)$ is preserved by the evolution of \eqref{eq:EulerArnold}.

To recover the Euler equations, we take $\mathsf{L}$ to be the Lie algebra of divergence-free vector fields on $\mathbb{T}^d$ with Lie bracket %. Recall that the Lie bracket is given by 
\begin{equation} 
[u,v] = u \cdot \nabla v - v \cdot \nabla  u %\, ,
\end{equation}
%and we equip $\mathsf{L}$ with the
and equipped with the $L^2$ inner product. %usual
Then, for any $v \in \mathsf{L},$ \eqref{eq:EulerArnold} says
\begin{equation}\begin{aligned} 
\int_{\mathbb{T}^d} \p_t u \cdot v \, dx & = \int_{\mathbb{T}^d} u \cdot (u \cdot \nabla v - v \cdot\nabla  u ) \, dx \, .
\end{aligned} \end{equation} 
Integrating by parts and using the fact that $u$ and $v$ are assumed to be divergence-free yields that $u$ satisfies the Euler equations. 

The complex Euler equations~\eqref{eq:ComplexEuler} introduced by \v{S}ver{\'a}k \cite{VS} are the Euler-Arnold equation in the Lie algebra $\mathsf{L} \otimes \C$ of complex-valued divergence-free vector fields equipped with the inner product 
\begin{equation}
\langle u, v \rangle = \Re \int u \cdot \bar v \, dx \, .
\end{equation}
For completeness, we include the details of the derivation of \eqref{eq:ComplexEuler}. Write $u= u^r + i u^i$, $v= v^r + i v^i$. Then the Euler-Arnold equation~\eqref{eq:EulerArnold} says 
\begin{equation} \begin{aligned}
\int \p_t  u^r_k v^r_k + \p_tu^i_k v^i _k & = \int u^r _k( u^r_l v^r_{k,l}     -  v^r_l u^r_{k,l} -      u^i_l v^i_{k,l}     +  v^i_l u^i _{k,l}) 
\\ & \quad \quad  + u^i_k
( u^i_l v^r_{k,l} - v^r_l u^i_{k,l}     +  u^r_l v^i_{k,l}     -  v^i_l u^r_{k,l})
\\ & = \int - ( u^r_k u^r_l ) _{,l} v^r_k - u_k^r u_{k,l}^r v^r_l + ( u_k^r u^i_l)_{,l} v_k^i  + u^r_k  u^i _{k,l} v^i_l 
\\& \quad \quad -( u_k^i u_l^i ) _{,l} v_k^r - u^i_k u^i_{k,l} v^r_l - ( u^i_k u^r_l)_{,l} v^i_k - u^i_k u^r_{k,l} v^i_l \\  
& = \int - u^r_{k,l} u^r_l  v^r_k -  u_k^r u_{k,l}^r v^r_l + u_{k,l} ^r u^i_l v_k^i + u^r_k  u^i _{k,l} v^i_l 
\\
& \quad \quad  - u_{k,l}^i u_l^i v_k^r - u^i_k u^i_{k,l} v^r_l - u^i_{k,l} u^r_l v^i_k - u^i_k u^r_{k,l} v^i_l  
\\
& = \int - u^r_{k,l} u^r_l  v^r_k -  u_l^r u_{l,k}^r v^r_k + u_{k,l} ^r u^i_l v_k^i + u^r_l  u^i _{l,k} v^i_k 
\\
& \quad \quad   - u_{k,l}^i u_l^i v_k^r - u^i_l u^i_{l,k} v^r_k - u^i_{k,l} u^r_l v^i_k - u^i_l u^r_{l,k} v^i_k .
\end{aligned} \end{equation}
Grouping the $v^r$ and $v^i$ terms, we have 
\begin{equation} \begin{aligned}
\int  ( \partial _t u^r_k + u^r_l u^r_{k,l} + u^i_l u^i_{k,l} + u^r_l u^r_{l,k} + u^i_l u^i_{l,k}) v_k^r  &=0,\\
\int  ( \partial _t u^i_k+ u_l^r u^i_{k,l} - u^i _l u^r_{k,l} + u^i_l u^r_{ l,k} - u^r_l u^i_{l,k} ) v^i_k  & =0. 
\end{aligned} \end{equation}
Since $v$ is divergence-free, the Helmholtz-Hodge decomposition implies that $\p_t u + \bar u \cdot \nabla u + ( \nabla \bar u ) ^ T u  $ is a gradient. Hence, we obtain \eqref{eq:ComplexEuler}.

The complex Euler equations conserve the energy $\sfrac{1}{2} \int  |u|^2 \, dx$, whereas energy conservation fails when the `real' Euler equations are considered for complex-valued vector fields, indicating that \eqref{eq:ComplexEuler} is a more natural complexification of the Euler equations. {\v S}ver\'ak proposed the geometric complexifications of the Euler and Navier-Stokes equations as toy models in the regularity theory for fluid PDEs. Model equations, notably, Tao's Eulerian~\cite{TaoAveragedNSE} and Lagrangian~\cite{TaoLagrangianModifications} modifications, the generalized SQG~\cite{constantin1994singular} and Constantin-Lax-Majda/De Gregorio~\cite{constantin1985simple,de1990one} equations, and more~\cite{VS}, have led to many interesting insights. While complex-valued solutions of the Navier-Stokes~\cite{LiSinai} and viscous Burgers equations~\cite{polavcik2008zeros,li2008complex,sverak2022singularities} are known to exhibit finite-time blow-up, the analogous problem for the complex Navier-Stokes equations
\begin{equation}
	\label{eq:CNS}
\tag{CNS}
\left\lbrace
\begin{aligned}
    \partial_t u + \bar{u} \cdot \nabla u + (\nabla \bar{u})^T u + \nabla \pi &= \Delta u \\
    \div u &= 0
    \end{aligned} \right.
\end{equation}
remains open. \eqref{eq:CNS} is globally well-posed in $d=2$, and it is expected that partial regularity holds in $d=3$~\cite{ckn} with minor technical adjustments because the nonlinearity is not in divergence form.\footnote{In particular, we suspect that the partial regularity theory for~\eqref{eq:CNS} could borrow techniques from the boundary partial regularity theory~\cite{seregin2002local}. The starting point is the local energy equality, see~\eqref{eq:localenergyequality} below.}
Furthermore, the vorticity equation~\eqref{eq:2dvorticity} in two dimensions does not have a maximum principle, and the global well-posedness of~\eqref{eq:CNS} with hypodissipation $-(-\Delta)^s$, $s \in (1,2)$, is open in $d=2$. In this setting, the second author established local-in-time well-posedness in subcritical Sobolev spaces and global-in-time existence of weak solutions~\cite{Ogden}. Below we focus only on the inviscid model. 

%Just as with the real Euler equation, the energy $\sfrac{1}{2} \int  |u|^2 \, dx$ is conserved. Energy conservation fails if the real Euler equations are considered for complex-valued vector fields, indicating that \eqref{eq:ComplexEuler} is a more natural complexification of the Euler equations. {\v S}ver\'ak proposed this geometric complexification of the Euler and Navier-Stokes equations in hopes of gaining insight into the regularity problems for the real Euler and Navier-Stokes equations. Notably, Li and Sinai \cite{LiSinai} showed that complex-valued solutions of the standard Navier-Stokes equations exhibit finite-time blowup. In \cite{Ogden}, the second author established local-in-time well-posedness for the geometrically complexified Navier-Stokes equations in subcritical Sobolev spaces. The question of singularity formation from smooth initial data for the complexified Navier-Stokes equations remains open. Below we focus only on the inviscid model. Some notable examples include Tao's Eulerian~\cite{TaoAveragedNSE} and Lagrangian~\cite{TaoLagrangianModifications} modifications, Hou-Luo scenario, the generalized Constantin-Lax-Majda, etc., many of which are reviewed in~\cite{VS}. \dacomment{still needs work, DA will take a stab}

In two dimensions, the vorticity $\omega = \curl u = \nabla^\perp \cdot u$ satisfies the equation
\begin{equation}
	\label{eq:2dvorticity}
    \p_t \omega + \bar{u} \cdot \nabla \omega = 0 \, ,
\end{equation}
and $u$ can be recovered, modulo the zeroth Fourier mode, which evolves via~\eqref{eq:zerothmode}, from $\omega$ by the Biot-Savart law
\begin{equation} 
\Delta \psi = \omega \, , \quad u = \nabla^\perp \psi \, .
\end{equation} 
We consider the linearized vorticity equation near a steady solution $u \equiv a\in \C^2$,
\begin{equation}
    \p_t \omega + \bar{a} \cdot \nabla \omega = 0 \, .
\end{equation}
The solution for the $k^{\rm th}$ Fourier mode $\hat \omega_k$ is
\begin{equation}
	\label{eq:instabilityinvorticityequation}
   \hat  \omega_k(t) = e^{-i \bar{a} \cdot k t} \hat \omega_k(0) \, .
\end{equation}
When $\Re(- i\bar{a} \cdot k )> 0$, the solution grows exponentially. This simple computation, proposed to us by V.~\v{S}ver{\'a}k and already observed in \cite{Ogden}, suggests that the complex Euler equations should be \emph{ill-posed below analytic regularity}. %implies that the linearized system is \emph{ill-posed below analytic regularity}.
%The instability of the linearized equation was ; we learned it directly from 

In this paper, we prove that the complex Euler equations~\eqref{eq:ComplexEuler} are nonlinearly ill-posed in every Sobolev and Gevrey space below analytic regularity. Not only do we demonstrate norm inflation in arbitrarily short time; we prove finite-time loss of analyticity in arbitrarily short time from arbitrarily small initial data. %As we explain,
As we explain, the ill-posedness is already visible at the level of shear flows. This partially explains instabilities encountered by the second author in %attempts to conduct
numerical simulations of the complex Euler equations during the preparation of~\cite{Ogden}. %We also construct solutions of $\eqref{eq:ComplexEuler}$ which exhibit finite-time blowup.

From a certain perspective, ill-posedness is somewhat surprising; since energy is conserved, one might expect to commute derivatives $\p^\alpha$ through the equation, obtain energy estimates for $\p^\alpha u$, and close \emph{a priori} estimates at finite regularity. This argument, which is valid for hyperbolic equations, does not work for~\eqref{eq:ComplexEuler}.

Burgers equation can be considered as the geodesic equation on the group of diffeomorphisms ${\rm Diff}(\T)$. The above complexification procedure produces
\begin{equation}
	\label{eq:complexBurgers}
\p_t u + \bar{u} \p_x u + 2 u \p_x \bar{u} = 0 \, .
\end{equation}
When $u = \bar{u}$, the nonlinearity evidently becomes $3 u \p_x u$, and~\eqref{eq:complexBurgers} is therefore a direct generalization of Burgers equation for which smooth solutions also conserve energy. It was introduced by {\v S}ver{\'a}k in~\cite{VS}. Interestingly,~\eqref{eq:complexBurgers} is hyperbolic; it can be written as a first-order $2 \times 2$ quasilinear system for $(a,b) = (\Re u, \Im u)$:
\begin{equation}
\begin{aligned}
    \p_t a + 3a\p_x a + 3b\p_x b &= 0 \\
    \p_t b + b\p_x a - a\p_x b &= 0 \, .
    \end{aligned}
\end{equation}
The matrix $A = \begin{bmatrix} 3a & 3b \\ b & -a \end{bmatrix}$ has characteristic equation
\begin{equation}
    \lambda^2 - 2a \lambda - 3(a^2 + b^2) = 0
\end{equation}
and real eigenvalues
\begin{equation}
    \lambda = {a} \pm \sqrt{ 4 a^2 + 3 b^2}
\end{equation}
which are distinct except at the `umbilical point' $(a,b) = 0$, in which case the matrix $A = 0$. The system is uniquely solvable in the $C^1$ class~\cite{friedrichs1948nonlinear,Douglis1952}.\footnote{Furthermore, due to the energy conservation,~\eqref{eq:ComplexEuler} and~\eqref{eq:complexBurgers} can be shown to satisfy a weak-strong uniqueness principle.} %will exhibit weak-strong uniqueness when the `strong' solution belongs to $L^\infty_t \rm{Lip}_x$ and the `weak solution' belongs to $L^\infty_t L^2_x \cap ?$ is assumed to satisfy energy inequality.}

On the other hand, the na{\"i}vely complexified Burgers equation
\begin{equation}
	\label{eq:naiveBurgers}
    \p_t u + 3 u \p_x u = 0 \, ,
\end{equation}
where $u : \T \times I \to \C$ is complex-valued, is ill-posed below analytic regularity. This is well known in the community around non-hyperbolicity. That is, the geometric complexification~\eqref{eq:complexBurgers} of Burgers is better behaved than the na{\"i}ve complexification, in contrast to the situation for the complex Euler equations.

In Section \ref{sec:nonhyperbolicsystems}, we prove ill-posedness and finite-time loss of analyticity in a general class of fully non-linear systems
\begin{equation}
	\label{eq:systemsforillposedness}
\p_t u + F(u,\p_x u) = 0 \, , \quad u : \T \times I \to \R^m \, ,
\end{equation}
assumed to be non-hyperbolic at a single constant equilibrium, see \textbf{(A1)}-\textbf{(A2)} in Section~\ref{sec:nonhyperbolicsystems}. Our perspective is to construct the \emph{infinite-dimensional unstable manifold} associated to the linear instability. We are partially inspired by an analogous construction of stable manifolds for vortex sheets due to Duchon and Robert~\cite{DuchonRobertVortexSheet}, see also~\cite{caflisch1986long,caflisch1989singular}. %, for example, in our decision to work in the Wiener algebra instead of $L^2$-based analytic spaces.
For more about ill-posedness in fluid PDEs, see~\cite{desjardins2006nonlinear,Wu2006,han2016ill,ambrose2019global} and the invariant manifold constructions in~\cite{de2009smooth,cheng2020stable}. %~\cite{Ambrosesurvey}.

Our original intention was to construct the full unstable manifold associated to the instability~\eqref{eq:instabilityinvorticityequation} in the complex Euler equations~\eqref{eq:ComplexEuler}. We encountered the problem that the associated semigroups do not smooth in the $a^\perp$ direction. It would be interesting to know whether the construction is possible. 

Ill-posedness for non-hyperbolic systems $\p_t u + F(t,x,u,\p_x u) = 0$, especially in the quasilinear case $\p_t u + A(t,x,u) \p_x u = F(t,x,u)$, is old and well studied. A seminal work in this direction is M{\'e}tivier's~\cite{metivier2005remarks}; see also recent works~\cite{lerner2010instability,morisse2020hyperbolicity} and references therein. The onset of instability, where solutions evolve from hyperbolic to elliptic regions, is also understood~\cite{lerner2018onset,morisse2018hyperbolicity,ndoumajoud2020m,ndoumajoud2021m}. Because the constructions are extremely general, they are generally short-time and microlocal. % in nature.
While we do not claim great novelty, our construction, for the restricted systems~\eqref{eq:systemsforillposedness} with a constant equilibrium, produces not only instability but also solutions on $\T \times (-\infty,0)$ which lose analyticity at time zero. Our construction is local in frequency only and, we hope, comparatively simple and transparent.

%Metivier, Desjardin-Grenier (this uses Grenier's technique of integrating forward), Han-Kwan--Nguyen, ... most common example is complex Burgers or complex Burgers starting from real but with i forcing; notably, these do not use the forward-backward integration; for completeness, give our own proof which emphasises the dynamical point of view; goes back to Duchon and Robert, and related to Euler (vortex sheets); some results on non-existence of sols known)

\section{Properties of the complex Euler equations}
\label{sec:simpleproperties}

We now discuss some simple properties of \eqref{eq:ComplexEuler}.

%As mentioned above,

Conservation of energy is an immediate consequence of the geometric structure of the Euler-Arnold equation~\eqref{eq:EulerArnold}, and it is also straightforward to derive from the PDE. If we recognize $(\nabla \bar{u})^T u = \sfrac{1}{2} \nabla |u|^2 + \Im \, (\nabla \bar{u})^T u$ and write $\pi = p + \sfrac{1}{2} \nabla |u|^2$, then smooth solutions of~\eqref{eq:ComplexEuler} satisfy the \emph{local energy equality}
\begin{equation}
	\label{eq:localenergyequality}
\p_t \frac{1}{2}  |u|^2 + \div \Re \bigg[ u \big( \frac{1}{2}|u|^2 + \bar{p} \big) \bigg]  = 0 \, .
\end{equation}
The pressure $p$ satisfies
\begin{equation}
-\Delta p = \div \div u \otimes \bar{u} + \div \Im [ (\nabla \bar{u})^T u ] \, .
\end{equation}

\begin{comment}
While conservation of energy is an immediate consequence of the geometric structure of the Euler-Arnold equation \eqref{eq:EulerArnold}, it is also straightforward to derive %energy conservation
directly from the PDE. Recall that 
%We have 
\begin{equation} \begin{aligned}
\frac{d}{dt} \frac 12 \int \bar u \cdot u = \frac 12 \int \bar u \cdot \p_t u + u \cdot \p_t \bar u  = \Re \int \bar u \cdot \p_t u \, .
\end{aligned} \end{equation} 
Using the equation~\eqref{eq:ComplexEuler} and integrating by parts, 
\begin{equation} \begin{aligned} 
\int \bar u \cdot \p_t u = - \int \bar u _k \bar u _l u_{k,l} + \bar u_k \bar u_{l,k} u_l 
%& =  \int ( \bar u _k \bar u _l )_ {,l} u_k + ( \bar u _k u_l ) _{,k} \bar u _l \\
 = \int \bar u_{k,l} \bar u _{l} u_k - \bar u_k \bar u_{l,k} u_l = 0 %+ \bar u_k u_{l,k} \bar u _l \, .
\end{aligned} \end{equation} 
after swapping the indices $k$ and $l$ in the last term. \dacomment{Simplified a step} %roles of the
%\begin{equation} 
%  \int \bar u _k \bar u _l u_{k,l} + \bar u_k \bar u_{l,k} u_l =0,
%  \end{equation} 
%  so energy is conserved.
\end{comment}

The complexification preserves Galilean invariance: If $u$ is a solution, then for any $c \in \R^d$, $\tilde u ( x,t ) = u ( x-c, t ) + c$ is also a solution. Furthermore, $\Re \int u$ is constant in time:
%Integrating by parts, we also see 
\begin{equation}
\label{eq:zerothmode}\begin{aligned}
\frac{d}{dt} \int u_k & = - \int \bar u _l u_{k,l} + \bar u_{l,k} u _l = - \int \bar u _{l,k} u_l = \Im \int \bar{u}_l u_{l,k} \, .
%& = - \int - \bar u_{l,l} u_k +  =\\
%& = - \int \frac 12 ( \bar u_{l,k} u_l - \bar u _l u_{l,k} ).
\end{aligned} \end{equation}
%The right-hand side is purely imaginary, the real part of the real part of the zeroth Fourier mode of   this fact 
%Combining this fact with Galilean invariance,
Hence, \emph{we assume without loss of generality below that} $\Re \int u = 0$. % is zero. 

In addition to conservation of energy, the model~\eqref{eq:ComplexEuler} keeps conservation of `enstrophy' $\int \omega^2 \, dx$, 
%\begin{equation} %\end{equation}
as is evident from the equation
\begin{equation}
    \p_t \omega^2 + \bar{u} \cdot \nabla \omega^2 = 0 \, .
\end{equation}
More generally, we have `Casimirs' $f(\omega)$, given by any complex-analytic function $f : \C \to \C$, since
\begin{equation}
    \p_t f(\omega) + \bar{u} \cdot \nabla f(\omega) = 0 \, .
\end{equation}
In three dimensions, the vorticity $\omega = \curl u = \nabla \times u$ satisfies the equation
\begin{equation}
\p_t \omega + [\bar{u},\omega] = 0 \, ,
\end{equation}
see~\cite[(3.31)]{VS}, 
and the helicity $\int u \cdot \omega \, dx$ 
%\begin{equation}\end{equation}
is conserved: We compute
\begin{equation}
    \p_t u \cdot \omega + (\bar{u} \cdot \nabla) u \cdot \omega + (\nabla \bar{u})^T u \cdot \omega + \nabla p \cdot \omega = 0
\end{equation}
\begin{equation}
    \p_t \omega \cdot u + (\bar{u} \cdot \nabla) \omega \cdot u - (\omega \cdot \nabla) \bar{u} \cdot u = 0 \, .
\end{equation}
Summing the two equations yields
\begin{equation}
    \p_t (u \cdot \omega) + \bar{u} \cdot \nabla(u \cdot \omega) + \nabla p \cdot \omega = 0 \, .
\end{equation}
%(The additional term in the velocity equation cancels the vortex stretching term.) Then integrate.

We conclude the section with a statement of analytic solvability for the model \!\eqref{eq:ComplexEuler}. 

Consider the Wiener algebra $A^r$ consisting of distributions $u$ on the torus whose Fourier coefficients satisfy
\begin{equation}
    \| u \|_{A^r} := \sum e^{r|k|} |\hat{u}(k)| < +\infty \, .
\end{equation}
We do not distinguish notation between scalar- and vector-valued function spaces except when necessary. $A^r$ is evidently a (non-unital) Banach algebra under pointwise multiplication:
\begin{equation}
    \| u v \|_{A^r} \leq \| u \|_{A^r} \| v \|_{A^r} \, ,
\end{equation}
whenever $r \geq 0$ and $u, v \in A^r$. For $r > 0$, $A^r$ is a space of analytic functions; $r$ is the `analyticity radius'.
\begin{proposition}[Analytic well-posedness]
Assume $u^{\rm in} \in A^{r_0} $. Then there exists $\eta >0$ and a unique solution $u(\cdot, t )$ such that for every $r \in (0, r_0),$ and for $t$ with $|t|< \eta ( r_0 - r ), $ $u(\cdot,t)$ is continuously differentiable in $t$ with values in $A^r$. 
\end{proposition}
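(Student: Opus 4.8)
The plan is to prove this by a standard Cauchy-Kovalevskaya-type argument in the scale of Banach spaces $A^r$, exploiting that $A^r$ is a Banach algebra and that the loss of one derivative incurred by $\nabla u$ can be compensated by shrinking the analyticity radius. First I would set up the abstract scheme: fix $0 < r < r' < r_0$ and observe the elementary ``Cauchy estimate'' $\|\p_x^\alpha v\|_{A^r} \lesssim (r'-r)^{-|\alpha|} \|v\|_{A^{r'}}$, which follows immediately from $|k|^{|\alpha|} e^{r|k|} \le C(r'-r)^{-|\alpha|} e^{r'|k|}$. Combined with the algebra inequality $\|vw\|_{A^r} \le \|v\|_{A^r}\|w\|_{A^r}$, this yields the key bilinear estimate: the map $u \mapsto N(u) := -\bar u \cdot \nabla u - (\nabla \bar u)^T u - \nabla \pi$, with $\pi$ recovered from the pressure Poisson equation, satisfies $\|N(u) - N(v)\|_{A^r} \le \frac{C}{r'-r}(\|u\|_{A^{r'}} + \|v\|_{A^{r'}})\|u-v\|_{A^{r'}}$ on balls in $A^{r'}$. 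One subtlety to handle carefully is the pressure term: from $-\Delta \pi = \div\div(u \otimes \bar u) + \div\Im[(\nabla\bar u)^T u]$ (after absorbing $\tfrac12 \nabla|u|^2$) one has $\nabla\pi = \nabla(-\Delta)^{-1}[\ldots]$, and since the multiplier $\nabla(-\Delta)^{-1}\div\div$ is bounded of order $0$ on Fourier modes with $|k|\ge 1$, composing with $A^r$ bounds gives $\|\nabla\pi\|_{A^r} \lesssim \frac{1}{r'-r}\|u\|_{A^r}\|u\|_{A^{r'}}$, using that $\Re\int u = 0$ (so the zeroth mode is controlled by~\eqref{eq:zerothmode}) to make the inverse Laplacian well-defined.

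Next I would invoke an abstract existence theorem for the ODE $\p_t u = N(u)$ in the scale $\{A^r\}_{0 < r < r_0}$ — for instance the Nirenberg--Nishida/Ovsyannikov theorem, or Baouendi--Goulaouic's version — whose hypotheses are exactly: (i) $N$ maps a ball of $A^{r'}$ into $A^r$ whenever $r < r'$, (ii) the Lipschitz bound above with the $(r'-r)^{-1}$ loss, and (iii) continuity of $N$ in the obvious sense. The conclusion of that theorem is precisely the stated one: there is $\eta > 0$, depending on $\|u^{\rm in}\|_{A^{r_0}}$ and $C$, and a unique solution $u(\cdot,t)$ with $u(\cdot,t) \in A^r$ and $t \mapsto u(\cdot,t)$ continuously differentiable into $A^r$ for $|t| < \eta(r_0 - r)$. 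The time-reversibility (the equation is invariant under $t \mapsto -t$, $u \mapsto \bar u$, or can simply be run backward since $N$ is not dissipative) gives the two-sided interval $|t| < \eta(r_0-r)$ rather than just $t \in [0,\eta(r_0-r))$.

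Alternatively, if one prefers a self-contained argument avoiding a black-box theorem, I would run a Picard iteration directly in the scale: define $u^{(0)} \equiv u^{\rm in}$ and $u^{(n+1)}(t) = u^{\rm in} + \int_0^t N(u^{(n)}(s))\,ds$, and prove by induction the weighted bound $\|u^{(n)}(t)\|_{A^{r}} \le M$ for $|t| \le \eta(r_0 - r)$, where the gain comes from the calculus inequality $\int_0^t (r(s)-r)^{-1}\,ds \lesssim$ (bounded) when one chooses the intermediate radii $r(s)$ to interpolate linearly between $r_0$ and $r$ as $s$ ranges over $[0,t]$ — this is the classical device that converts the derivative loss into a finite time of existence proportional to the radius drop. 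Contraction in $A^r$ for $\eta$ small then gives convergence to the unique fixed point, and differentiating the integral equation gives $\p_t u = N(u) \in A^r$ continuously in $t$.

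The main obstacle I anticipate is purely bookkeeping: carefully tracking the two distinct radii through the nonlinear and the nonlocal (pressure) terms simultaneously, and verifying that the nonlocal operator $\nabla(-\Delta)^{-1}\div\div$ genuinely acts boundedly on $A^r$ uniformly — the point being that it is a Fourier multiplier homogeneous of degree zero away from $k=0$, hence bounded on $\ell^1$ of the weighted Fourier coefficients, provided one has already reduced to mean-zero (in the real part) data as noted in Section~\ref{sec:simpleproperties}. There is no genuine analytic difficulty here beyond the standard abstract Cauchy-Kovalevskaya machinery; the content of the proposition is that~\eqref{eq:ComplexEuler} fits that framework, and the rest of the paper is precisely devoted to showing that analytic regularity is, in a strong sense, \emph{sharp}.
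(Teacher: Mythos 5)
Your proposal is correct and follows essentially the same route as the paper: the paper verifies the hypotheses of Nirenberg's abstract Cauchy--Kovalevskaya theorem in the scale $A^r$, using the algebra property, the Cauchy-type gradient estimate $\|\nabla u\|_{A^{r'}} \le \frac{e}{r-r'}\|u\|_{A^{r}}$, and the resulting Lipschitz bound for the Leray-projected nonlinearity $F(u) = -\mathbb{P}(\bar u\cdot\nabla u + (\nabla\bar u)^T u)$. Your explicit handling of the pressure via the degree-zero Fourier multiplier $\nabla(-\Delta)^{-1}\div\div$ is equivalent to the paper's use of $\mathbb{P}$, so the argument matches.
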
 

\begin{proof} 
We apply Nirenberg's abstract Cauchy-Kovalevskaya theorem (\cite{Nirenberg}, Theorem 1.1). We verify the hypotheses of the theorem. Assume $r > r^\prime.$ First, we have $A^r \subset A^{r^\prime} $ and $\| u \| _{ A^{r^\prime} } \leq \| u \| _{ A^r}$. 
We have the estimate 
\begin{align}  \label{eq:gradientestimate}
\| \nabla u \| _{A^{r^\prime}}&  = \sum |k| e^{r^\prime | k | } | \hat u (k) | 
 = \sum |k|e^{(r^\prime - r ) | k| } e^{r | k | }  | \hat u (k) | \nonumber\\
& \leq \sup (e^{(r^\prime - r ) | k| }|k|) \| u \| _{A^r}
 \leq \frac{ e}{r - r^\prime}\| u \| _{A^r}. 
 \end{align}
Let $F(u) = - \mathbb{P} ( \bar u \cdot \nabla u + ( \nabla \bar u )^T u ).$
Assume $\|u \|_ {A^r} , \| v \| _{ A^r } < M $. Then, 
\begin{align} 
\| F (u ) \!-\! F (v) \|_{A^{r^\prime}} 
& 
\leq \| \bar u \cdot \nabla ( u\!-\!v ) \!+\! ( \bar u \!-\! \bar v ) \cdot \nabla v \!+\! ( \nabla \bar u ) ^\top ( u\!-\!v) \!+\! ( \nabla ( \bar u \!-\! \bar v ) )^\top v \| _{A^{r^\prime }}  \nonumber\\
& \leq \| u \|_{A^{r^\prime }} \| \nabla ( u \!-\! v ) \|_{A^{r^\prime }}
\!+\! \| \nabla v  \|_{A^{r^\prime }} \|  u \!-\! v  \|_{A^{r^\prime }}
\!+\! \|\nabla  u \|_{A^{r^\prime }} \|  u \!-\! v  \|_{A^{r^\prime }}
\nonumber\\ & \quad + \| v \|_{A^{r^\prime }} \| \nabla ( u - v ) \|_{A^{r^\prime }} \nonumber\\
& \leq \frac{ CM}{r - r^\prime } \| u - v \| _{ A^r }.
 \end{align} 
Therefore $F$ maps $A^r$ continuously into $A^{r^\prime}$. 
Let $A_u (v) = - \mathbb{P} ( \bar u \cdot\nabla  v + \bar v \cdot \nabla u + ( \nabla \bar u )^T v + (\nabla  \bar v)^T u ) $. 
We have 
\begin{align} 
\| F ( u ) - F(v) - A_{u}(u-v) \| _{ A^{r^\prime}} & \leq \| ( \bar u - \bar v ) \cdot \nabla ( v - u ) + ( \nabla ( \bar u - \bar v )^T ) ( v -u ) \| _{ A ^{r ^ \prime }} \nonumber\\
& \leq \frac{ C}{r - r^\prime} \| u - v \|^2. 
 \end{align} 
\end{proof}

\section{Complex shear flows}

In this section, we prove ill-posedness (Theorem~\ref{thm:ill-posedness}) and finite-time loss of analyticity (Corollary~\ref{cor:finitetimeblowup}) for the model~\eqref{eq:ComplexEuler}.

We consider solutions which are translation invariant in $y$, namely, $u = u(x,t)$. This symmetry is preserved under the evolution. In particular, the condition $\div u = 0$ yields that $u = (u_1,u_2)$ satisfies $u_1 = a_1(t)$ is constant-in-$(x,y)$. It will be convenient to decompose $u = a(t) + b(x,t) e_2$, where $\int_\T b \, dx = 0$. That is, $u$ is decomposed as a mean-zero shear flow $b$ and a constant background flow $a$, which is purely imaginary. 

The Euler equations become
\begin{equation}
\left\lbrace
\begin{aligned}
    \p_t a_1 + \fint {\p_x \bar b} b \, dx &= 0 \\
    \p_t a_2 &= 0 \\
    \p_t b  + \bar{a_1} \p_x b &= 0 \, .
    \end{aligned} \right.
\end{equation}
Subsequently, we may assume that $a_2 = 0$ without loss of generality. Next, we write
\begin{equation}
    b = \sum_{k \neq 0} b_k e^{ikx} \, , \quad k \in  \Z \, .
\end{equation}
This diagonalizes the $b$ equation. In the $a_1$ equation,
\begin{equation}
\fint {\p_x \bar b} b \, dx = - i \sum_{k \neq 0} k |b_k|^2 \, .
\end{equation}
It will be further convenient to write $a_1 = iq$, $q \in \R$.

In conclusion, the resulting infinite-dimensional ODE system is
\begin{equation}
    \label{eq:ODEsystem}
    \left\lbrace
\begin{aligned}
    \p_t q &= \sum_{k \neq 0} k |b_k|^2 \\
    \p_t b_k &= - q k b_k \, ,
    \end{aligned} \right.
\end{equation}
supplemented with the initial condition $(q,b)|_{t=0} = (q^{\rm in},b^{\rm in})$.

We say that $(q,b)$ is a \emph{weak solution} on the finite open interval $I$ if $b \in L^2_t H^{1/2}_x(\T \times I)$, which, in particular, guarantees that $\p_t q \in L^1(I)$, and the ODEs~\eqref{eq:ODEsystem} are satisfied in the integral sense. It follows that $q \in C(\bar{I})$ and $\p_t b \in L^2_x H^{-1/2}_x$. This is enough to show that weak solutions conserve the energy $|q|^2 + \sum_k |b_k|^2$. %, as is easily seen from the representation formula.

The dynamics of~\eqref{eq:ODEsystem} are not difficult to understand. For example, consider the case where $b_k^{\rm in}=0$ for all but one value of $k$ and $b_k \in \mathbb{R}$. The first equation of the system \eqref{eq:ODEsystem} simplifies to 
\begin{equation}
	\label{eq:simplifiedkthing}
    \p_t q = k b_k^2 \, .
\end{equation}
Since the energy is conserved, we introduce a new parameter $\theta$ and write 
\begin{equation}
q = -\sqrt E \cos \theta\, , \quad b_k = \sqrt E \sin \theta\, , \quad E = q^2 + b_k ^2\,  , \quad \theta \in (-\pi, \pi ] \, .
\end{equation}
We then have $\partial_t q = \sqrt E \sin \theta \, \p_t \theta$. At the same time,~\eqref{eq:simplifiedkthing} gives $\p_t q = k E \sin ^2 \theta $, so
\begin{equation}\label{eq:simplifiedequation}
\p_t \theta = \sqrt E k \sin \theta \, .
\end{equation}
This system has two fixed points, $\theta=0$ (unstable) and $\theta=\pi$ (stable). If $\theta^{\rm in} \neq  0$, then the solution evolves to $\theta= \pm \pi$ as $t \to + \infty.$ In particular, $\sin \theta = \pm 1$ after finite time, when all the energy of the system becomes momentarily concentrated in the $k^{\rm th}$ Fourier mode.~\eqref{eq:simplifiedequation} is also readily seen to be a time rescaling of $\p_t \theta = \sin \theta$. %so

The solutions above are enough to prove ill-posedness. For simplicity, we present only the Sobolev ill-posedness; the analogous proof works in Gevrey spaces below analytic regularity, also in Corollary~\ref{cor:illposednessfullynonlinear}.

\begin{theorem}[Ill-posedness]
\label{thm:ill-posedness}
Fix $\varepsilon> 0$, $T>0$, and $M>0$. Then there exists $u^{\rm in}$ with $\| u^{\rm in}\|_{H^s} < \varepsilon$ and $\sup_{[0,T]} \| u ( \cdot, t ) \| _{H^s} > M$.
\end{theorem}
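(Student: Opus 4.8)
The plan is to use the explicit single-mode solutions exhibited above with the wavenumber $k$ taken very large, arranging that the energy, which is initially almost entirely in the zeroth mode, migrates into the $k$-th mode in arbitrarily short time. Assume first $s>0$ (for $s=0$ the claim is false, since $\|u\|_{L^2}$ is conserved; the case $s<0$ is handled symmetrically, starting with all the energy in the $k$-th mode and letting it flow into the zeroth mode). Fix an integer $k\ge 1$ and an angle $\theta^{\rm in}\in(0,\pi/2)$, both to be chosen at the end, set $E:=\varepsilon^2/4$, and take as initial datum the shear flow with zeroth mode $a^{\rm in}=(iq^{\rm in},0)$ and shear part $b^{\rm in}(x)=\sqrt E\,(\sin\theta^{\rm in})e^{ikx}$, where $q^{\rm in}:=-\sqrt E\cos\theta^{\rm in}$. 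This $u^{\rm in}$ is divergence-free, independent of $y$, and lies in $A^r$ for every $r>0$.

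Since the $b$-equation of~\eqref{eq:ODEsystem} is diagonal in Fourier modes, the solution issuing from $u^{\rm in}$ has $b(\cdot,t)=b_k(t)e^{ikx}$ a single mode for all time, with $(q(t),b_k(t))$ solving $\p_t q=kb_k^2$, $\p_t b_k=-qkb_k$; in particular $b_k(t)$ stays real, $u(\cdot,t)\in A^r$ for every $r>0$, and by the analytic well-posedness proposition this is the unique solution, defined on $\T^2\times\R$ (for the theorem it suffices that it is \emph{a} solution). Writing $q=-\sqrt E\cos\theta$, $b_k=\sqrt E\sin\theta$, with $E=q^2+b_k^2$ conserved, puts us in the setting of~\eqref{eq:simplifiedequation}, $\p_t\theta=\sqrt E\,k\sin\theta$, so $\theta$ increases monotonically from $\theta^{\rm in}$ toward $\pi$; by quadrature it reaches $\pi/2$ at the finite time
\[
t_*=\frac{1}{\sqrt E\,k}\log\cot\frac{\theta^{\rm in}}{2}\ \le\ \frac{1}{\sqrt E\,k}\log\frac{2}{\theta^{\rm in}} ,
\]
and at $t=t_*$ we have $q(t_*)=0$ and $|b_k(t_*)|=\sqrt E$, so all the energy sits in the $k$-th mode and $\|u(\cdot,t_*)\|_{H^s}^2=(1+k^2)^s E\ge k^{2s}\varepsilon^2/4$.

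It remains to choose the parameters. From $|q^{\rm in}|^2\le E$ and $|b_k^{\rm in}|^2\le E(\theta^{\rm in})^2$ we get $\|u^{\rm in}\|_{H^s}^2\le E\big(1+(1+k^2)^s(\theta^{\rm in})^2\big)$. Take $\theta^{\rm in}:=k^{-s-1}$; then $(1+k^2)^s(\theta^{\rm in})^2\le 2^s k^{-2}\to 0$, so $\|u^{\rm in}\|_{H^s}^2\le 2E=\varepsilon^2/2<\varepsilon^2$ once $k$ is large; meanwhile $t_*\le \frac{2}{\varepsilon k}\big((s+1)\log k+\log 2\big)\to 0$, so $t_*<T$ once $k$ is large; and $k^{2s}\varepsilon^2/4\to\infty$, so $\|u(\cdot,t_*)\|_{H^s}>M$ once $k$ is large. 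Choosing $k$ large enough for all three conditions gives the theorem, with the norm inflation occurring at the time $t_*\in(0,T)$.

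There is no deep obstacle; the only point requiring care is this simultaneous choice of parameters, which succeeds because the inflation time $t_*$ depends on $\theta^{\rm in}$ only through $\log(1/\theta^{\rm in})$, so $\theta^{\rm in}$ can be made polynomially small in $k$---enough to kill the $(1+k^2)^s(\theta^{\rm in})^2$ term---while $t_*$ still tends to $0$. The same construction yields ill-posedness in every Gevrey class below analytic regularity: if a single mode $e^{ikx}$ has norm $w(k)$ with $\log w(k)=o(k)$ (which is precisely what distinguishes such spaces from analytic regularity, where $w(k)=e^{rk}$), one instead takes $\theta^{\rm in}\sim 1/w(k)$, so that $\|u^{\rm in}\|\les\varepsilon$, $t_*\sim \frac{\log w(k)}{\sqrt E\,k}\to 0$, and $\|u(\cdot,t_*)\|\ge w(k)\sqrt E\to\infty$.
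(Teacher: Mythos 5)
Your proof is correct and follows essentially the same route as the paper's: a single high-frequency shear mode $e^{ikx}$ riding on a purely imaginary constant flow, reduced to the pendulum equation $\p_t\theta=\sqrt{E}\,k\sin\theta$, with the initial angle taken polynomially small in $k$ so that the time to reach $\theta=\pi/2$ is $O(k^{-1}\log k)$ while the $H^s$ norm at that time is $\gtrsim k^{s}\varepsilon$. The only differences are cosmetic (you integrate the ODE exactly by quadrature where the paper uses the differential inequality $\sin\theta\geq\tfrac{2}{\pi}\theta$, and you normalize the initial amplitude slightly differently). One small correction: your parenthetical claim that $s<0$ is ``handled symmetrically'' is false --- for $s\leq 0$ one has $\|u\|_{H^s}\leq\|u\|_{L^2}$, which is conserved, so no norm inflation is possible; the theorem is implicitly asserted only for $s>0$.
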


\begin{proof} Let $q^{\rm in } =- \frac \varepsilon 2 , $ $b^{\rm in} = \frac \varepsilon {2 \langle k \rangle ^s } e^{ik x } $ where $k$ is to be determined. Evolving according to \eqref{eq:ODEsystem}, there is some $T_0>0$ such that $\| u( \cdot , T_0 ) \| _{H^s} \geq \frac{ \varepsilon}{4} \langle k \rangle ^s. $ Assume $k$ is large enough that $\frac{ \varepsilon}{4} \langle k \rangle ^s> M$. The result will follow by estimating $T_0$. Since $\sin \theta \geq \frac{ 2 }{ \pi } \theta $ for $\theta \in [0 , \frac \pi 2 ],$ we have 
\begin{equation}
\partial_t \theta \geq \frac \varepsilon 2 k \sin \theta \geq \frac \varepsilon \pi k \theta
\end{equation} for $\theta \leq \frac \pi 2. $ Therefore $\theta \geq \theta^{\rm in} e^{\frac{ \varepsilon}{\pi} k t }  \approx \langle k \rangle^{-s} e^{\frac{ \varepsilon}{\pi} k t } $. We can take $T_0$ to be the time at which $\theta =1$, so 
\begin{equation}
T_0 \lesssim \frac{\pi}{k \varepsilon } \log \langle k \rangle^s \, .
\end{equation}
Since $T_0 \to 0$ as $k \to +\infty$, we may choose $k$ large enough so that $T_0 < T.$
\end{proof}

We now demonstrate the finite-time loss of analyticity.
\begin{lemma}
    \label{lem:localwellposedness}
    Suppose that $q^{\rm in} > 0$, $b^{\rm in} \in L^2$, and $b_k = 0$ for $k \leq 0$. Then there exists a unique global-in-time weak solution $(q,b) \in C([0,+\infty)) \times L^2_t H^{1/2}_x(\T \times \R_+)$ to~\eqref{eq:ODEsystem} with initial data $(q^{\rm in},b^{\rm in})$.
\end{lemma}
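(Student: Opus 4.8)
The plan is to reduce the infinite‑dimensional system to an \emph{autonomous scalar ODE}, using the diagonal structure of the $b$‑equation together with energy conservation, and then to read off both existence and uniqueness from elementary scalar ODE theory. Set $Q(t):=\int_0^t q(s)\,ds$. Since, for each fixed $k$, the equation $\p_t b_k=-qkb_k$ is a linear ODE in $t$ once $q$ is known, any weak solution satisfies $b_k\equiv 0$ for $k\le 0$ (because $b_k^{\rm in}=0$ there) and $b_k(t)=b_k^{\rm in}e^{-kQ(t)}$ for $k>0$. Consequently $\p_t q=\sum_{k>0}k|b_k|^2\ge 0$, so $q$ is non‑decreasing with $q\ge q^{\rm in}>0$, whence $Q(t)\ge q^{\rm in}t$. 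Feeding $b_k(t)=b_k^{\rm in}e^{-kQ(t)}$ into the conserved energy $|q|^2+\sum_k|b_k|^2\equiv E:=(q^{\rm in})^2+\sum_k|b_k^{\rm in}|^2$ and using $q>0$ yields the closed relation
\begin{equation}
q(t)=G(Q(t)),\qquad G(s):=\Big(E-\sum_{k>0}|b_k^{\rm in}|^2e^{-2ks}\Big)^{1/2},
\end{equation}
so $Q$ solves $Q'=G(Q)$, $Q(0)=0$.

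The crucial observation is that $G$ is continuous on $[0,\infty)$ — the defining series converges uniformly by the Weierstrass test — and satisfies $q^{\rm in}\le G(s)\le\sqrt{E}$ for all $s\ge 0$, even though $G$ need not be Lipschitz at $s=0$ when $\sum_k k|b_k^{\rm in}|^2=+\infty$. Hence $\Psi(s):=\int_0^s \frac{d\sigma}{G(\sigma)}$ is a $C^1$ strictly increasing bijection of $[0,\infty)$ onto itself, and the scalar problem $Q'=G(Q)$, $Q(0)=0$, has the unique global solution $Q=\Psi^{-1}$. Uniqueness of the weak solution is then immediate: any weak solution produces, by the reduction above, a solution of this scalar ODE, after which $q=Q'=G\circ Q$ and $b_k=b_k^{\rm in}e^{-kQ}$ are completely determined.

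For existence I would run the reduction in reverse. Define $Q:=\Psi^{-1}$, $q:=G\circ Q\ (=Q')$, $b_k:=b_k^{\rm in}e^{-kQ}$ for $k>0$ and $b_k:=0$ otherwise, and check directly that $(q,b)$ is a weak solution on $[0,\infty)$: one has $q\in C([0,\infty))$ with $q(0)=q^{\rm in}$; the $b$‑equation holds since $\p_t b_k=-kQ'b_k=-kqb_k$; and the $q$‑equation follows by differentiating $q^2=E-\sum_{k>0}|b_k|^2$ in $t$ (legitimate for $t>0$, where $Q(t)>0$, by term‑by‑term differentiation of the uniformly convergent series) and then integrating, using $\int_0^T\sum_{k>0}k|b_k(t)|^2\,dt<\infty$. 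The claimed regularity $b\in L^2_tH^{1/2}_x$ — in fact on all of $(0,\infty)$ — is the smoothing estimate, obtained from $Q(t)\ge q^{\rm in}t$:
\begin{equation}
\int_0^\infty \|b(t)\|_{H^{1/2}}^2\,dt=\sum_{k>0}\langle k\rangle|b_k^{\rm in}|^2\int_0^\infty e^{-2kQ(t)}\,dt\les \frac{1}{q^{\rm in}}\sum_{k>0}|b_k^{\rm in}|^2,
\end{equation}
which is finite precisely because $b^{\rm in}\in L^2$; it also justifies the integral formulation of the $q$‑equation down to $t=0$.

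The main obstacle — and the reason a naive contraction argument in $C([0,T])$ does not close — is that for merely $L^2$ data $\p_t q$ may blow up as $t\to 0^+$, so $G$ can fail to be Lipschitz at the origin; this is exactly why the weak (rather than classical) formulation is needed. The autonomous reduction sidesteps the difficulty: continuity of $G$ together with the strict lower bound $G\ge q^{\rm in}>0$ is all that is required for unique global solvability of $Q'=G(Q)$, and this lower bound is precisely where the hypotheses $q^{\rm in}>0$ and $b_k^{\rm in}=0$ for $k\le 0$ enter.
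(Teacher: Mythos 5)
Your proof is correct, but it takes a genuinely different route from the one in the paper. The paper runs a perturbative argument: a local-in-time contraction (Picard) in $L^2_tH^{1/2}_x$ built on the smoothing semigroup $e^{-q^{\rm in}t|\nabla|}$ and two bilinear estimates, followed by continuation using the representation formula $b_k=b_k^{\rm in}e^{-k\int_0^tq}$ and the monotonicity and boundedness of $q$. You instead exploit the complete integrability of the shear-flow system: the representation formula plus energy conservation collapse everything onto the scalar autonomous ODE $Q'=G(Q)$, and the strict lower bound $G\ge q^{\rm in}>0$ lets you solve it globally and uniquely by separation of variables even though $G$ need not be Lipschitz at the origin (your diagnosis of why a naive contraction in $C([0,T])$ fails, and why the lower bound on $G$ replaces Lipschitz continuity, is exactly right). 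Your argument is more elementary, yields an explicit solution formula, and delivers existence and uniqueness in one stroke; its only cost is that it is tailored to this exactly solvable structure, whereas the paper's fixed-point scheme is the one that extends to the situations mentioned in the surrounding remarks (data with $P_{k\le 0}b^{\rm in}$ merely analytic rather than zero, and the fractionally dissipative variants), where no first integral closes the system. Two small points worth making explicit if you write this up: energy conservation for weak solutions (which the paper records just before the lemma) is an input to your reduction and should be cited or rederived from the representation formula by Tonelli; and the identity $b_k\equiv 0$ for $k\le 0$ and $b_k=b_k^{\rm in}e^{-kQ}$ for a general weak solution requires the Gronwall-type uniqueness for the linear mode equations with continuous coefficient $q$, which holds because $\p_tb\in L^2_tH^{-1/2}_x$ makes each $b_k$ absolutely continuous.
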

\begin{proof}
    First, we prove the local theory. Write $q = q^{\rm in} + \tilde{q}$. The integral formulation of the equation is
    \begin{align}
      &  \tilde{q}(t) = \int_0^{t} \sum_{k \geq 0} k|b_k|^2 \, ds
\\
     &   b_k(t) = e^{-q^{\rm in} k t} b_k^{\rm in} -  \int_0^t e^{-q^{\rm in} k (t-s)} \tilde{q} b_k(s) \, ds \, .
    \end{align}
    %(Not strictly necessary to write $b$ equation in integral form.)
    By elementary energy estimates, we have that
    \begin{equation}
        \label{eq:initialdataestimate}
        q^{\rm in} \sum_{k \geq 0} k |e^{-q^{\rm in} k t} b_k^{\rm in}|^2 \leq \frac{1}{2} \| b^{\rm in} \|_{L^2}^2 \, ,
    \end{equation}
    since $e^{-q^{\rm in} k t} b_k^{\rm in}$ is the Fourier representation of the solution of the PDE $\p_t c = - q^{\rm in} |\nabla| c$ with $c|_{t=0} = b^{\rm in}$.
    The left-hand side of~\eqref{eq:initialdataestimate} will be small provided that time is taken sufficiently small. It remains to demonstrate that the bilinear forms
    \begin{align}
        \label{eq:firsbilinearform}
   & (b,c) \mapsto \int_0^{t} \sum_{k \geq 0} kb_k \bar{c_k} \, ds
\\
        \label{eq:secondbilinearform}
  &  (\tilde{q},c) \mapsto - \sum_{k \neq 0}  \int_0^t e^{-q^{\rm in} k (t-s)} \tilde{q} k c_k(s) \, ds \,  e^{ikx}
    \end{align}
    are bounded $L^2_t H^{1/2}_x(\T \times (0,\varepsilon)) \times L^2_t H^{1/2}_x(\T \times (0,\varepsilon)) \to C([0,\varepsilon])$ and $L^2_t H^{1/2}_x(\T \times (0,\varepsilon)) \times C([0,\varepsilon]) \to L^2_t H^{1/2}_x(\T \times (0,\varepsilon))$, respectively, with constants uniform in $\varepsilon$ small. The estimate on~\eqref{eq:firsbilinearform} follows from Cauchy-Schwarz and the definition of the $L^2_t H^{1/2}_x$ norm. The second bilinear form~\eqref{eq:secondbilinearform} is the Fourier representation of the solution to the PDE
    \begin{equation}
        \p_t b = - q^{\rm in} |\nabla| b - \tilde{q} |\nabla| c
    \end{equation}
    with $b|_{t = 0} = 0$, and the required estimate is an easy energy estimate. With these ingredients, local-in-time existence and uniqueness for~\eqref{eq:ODEsystem} follow from the abstract Picard lemma (see, e.g., Lemma A.1 in~\cite{gallagherasymptotics} or Lemma 5.5, p. 217, in~\cite{bahouri}), as is typical in the Navier-Stokes well-posedness theory.

    Second, we address the global theory. As easily seen from the representation formula $b_k = e^{-k \int_0^t q(s) \, ds} b_k^{\rm in}$, the solution is immediately analytic and can be continued provided that it remains analytic. Since $q$ is increasing and, in particular, $q$ is bounded below by $q^{\rm in}$ and above by the (conserved) total $L^2$ norm, its radius of analyticity grows linearly as $t \to +\infty$. Hence, the solution is global in time.
\end{proof}

\begin{remark}
    More generally, the system~\eqref{eq:ODEsystem} is locally well-posed forward-in-time under the assumption that $q^{\rm in} > 0$, $b^{\rm in} \in L^2$, and $P_{k \leq 0} b^{\rm in}$ is analytic.
\end{remark}

\begin{corollary}[Finite-time loss of analyticity]
\label{cor:finitetimeblowup}
There exist analytic solutions $(q,b)$ on $(-\infty,0)$ satisfying $b|_{t=0} \in L^2 \setminus \cup_{s > 0} H^s$ and which cannot be extended to a weak solution on $(-\infty,\varepsilon)$ for any $\varepsilon > 0$.
\end{corollary}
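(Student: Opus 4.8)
\emph{Proof proposal.} The plan is to run the global forward solutions of Lemma~\ref{lem:localwellposedness} \emph{backwards in time}. Fix $q_0 > 0$ and choose $b^{\rm in} \in L^2$ supported on positive frequencies with $b^{\rm in} \notin H^s$ for \emph{every} $s > 0$; for instance $b^{\rm in}_{2^j} = 1/j$ for $j \geq 1$ and $b^{\rm in}_k = 0$ otherwise, which lies in $\ell^2$ but has $\sum_k \langle k\rangle^{2s}|b^{\rm in}_k|^2 = +\infty$ for all $s > 0$. Lemma~\ref{lem:localwellposedness} produces a global-in-time weak solution $(Q,B)$ of~\eqref{eq:ODEsystem} on $[0,+\infty)$ with $(Q,B)|_{t=0} = (q_0, b^{\rm in})$. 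From the representation $B_k(t) = e^{-k\int_0^t Q}\,b^{\rm in}_k$ and $Q \geq q_0$, one gets $|B_k(t)| \leq e^{-q_0 k t}|b^{\rm in}_k|$; since $B_k \equiv 0$ for $k \leq 0$, $B(\cdot,t)$ is analytic for every $t > 0$ (with analyticity radius at least $q_0 t$), and $|B_k(t)| \le |b^{\rm in}_k|$ gives $B(\cdot,t) \to b^{\rm in}$ in $L^2$ as $t \to 0^+$.

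Next I would exploit the time-reversal symmetry of~\eqref{eq:ODEsystem}: a direct computation shows that if $(Q,B)$ solves~\eqref{eq:ODEsystem}, then so does $(q,b)(t) := (-Q(-t),\,B(-t))$. Hence $(q,b)$ is an analytic solution of~\eqref{eq:ODEsystem} on $(-\infty,0)$ (its Fourier coefficients $b_k(t) = B_k(-t)$ decay exponentially there), and $b|_{t=0} = b^{\rm in} \in L^2 \setminus \bigcup_{s>0}H^s$ in the sense that $b(\cdot,t) \to b^{\rm in}$ in $L^2$ as $t \to 0^-$. Through $u = iq\,e_1 + b\,e_2$ this is a complex shear-flow solution of~\eqref{eq:ComplexEuler} that loses analyticity at time zero.

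For non-extendability, suppose toward a contradiction that $(q,b)$ extends to a weak solution $(\hat q,\hat b)$ on $(-\infty,\varepsilon)$; restricting to a bounded open interval $I \ni 0$ gives a weak solution in the sense defined above, so $\hat q \in C(\bar I)$ with $\hat q(0) = q(0) = -q_0 < 0$, whence $\hat q(t) \leq -q_0/2$ on some $[0,\varepsilon']$, $\varepsilon' > 0$. For each fixed $k$, integrating the scalar linear ODE $\p_t \hat b_k = -\hat q\,k\,\hat b_k$ with $\hat b_k(0) = b^{\rm in}_k$ gives $\hat b_k(t) = e^{-k\int_0^t \hat q}\,b^{\rm in}_k$, so $|\hat b_k(t)| \geq e^{q_0 k t/2}|b^{\rm in}_k|$ on $[0,\varepsilon']$. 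Since $b^{\rm in}$ lies in no $H^s$, we have $\sum_k e^{ck}|b^{\rm in}_k|^2 = +\infty$ for every $c > 0$ (because $\langle k\rangle^{2s} \lesssim_{s,c} e^{ck}$), hence for each $t \in (0,\varepsilon']$,
\[
\sum_k \langle k\rangle\,|\hat b_k(t)|^2 \;\geq\; \sum_k \langle k\rangle\,e^{q_0 k t}\,|b^{\rm in}_k|^2 \;=\; +\infty \, ,
\]
so $\hat b(\cdot,t) \notin H^{1/2}$ for a set of times of positive measure, contradicting $\hat b \in L^2_t H^{1/2}_x$.

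The main obstacle is the last step: one must justify the modewise representation $\hat b_k(t) = e^{-k\int_0^t\hat q}b^{\rm in}_k$ for an \emph{arbitrary} weak extension — this uses only $\hat q \in C(\bar I)$ and that~\eqref{eq:ODEsystem} holds in the integral sense for each Fourier coefficient — and then pair it with the elementary fact that membership in no Sobolev space forces $\sum e^{ck}|b_k|^2 = +\infty$ for all $c > 0$. The sign $\hat q(0) = -q_0 < 0$ is essential: it is exactly the unstable value responsible for the high-frequency instability~\eqref{eq:instabilityinvorticityequation}, and it forces every Fourier mode to grow forward in time; for $\hat q(0) > 0$ the solution would instead regularize, as in Lemma~\ref{lem:localwellposedness}.
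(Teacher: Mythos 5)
Your proposal is correct and follows essentially the same route as the paper: reverse time in the global solutions of Lemma~\ref{lem:localwellposedness} with non-Sobolev data, then use the mode-wise representation formula $\hat b_k(t)=e^{-k\int_0^t\hat q}\,b_k^{\rm in}$ together with the continuity of $\hat q$ to show that any weak extension would force exponential decay of $b^{\rm in}_k$, contradicting $b^{\rm in}\notin\cup_{s>0}H^s$. The only (cosmetic) differences are the choice of time-reversal symmetry — you use $(q,b)(t)=(-Q(-t),B(-t))$, keeping the modes positive and making $q(0)<0$, while the paper uses $u(-x,-t)$, keeping $q>0$ and flipping the Fourier support — and that you derive the contradiction from the $L^2_tH^{1/2}_x$ requirement rather than from finite energy.
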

\begin{proof} Reverse time in the solutions of Lemma~\ref{lem:localwellposedness} with $b^{\rm in} \in L^2 \setminus \cup_{s > 0} H^s$. (For example, use that if $u(x,t)$ solves \eqref{eq:ComplexEuler}, then $u(-x,-t)$ is also a solution. The resulting solutions are supported in non-positive Fourier modes.) If one of these solutions could be extended as a weak solution, then by continuity, we would have $q > 0$ in a neighborhood of the origin. Since weak solutions have finite energy, then, by the representation formula, $e^{\varepsilon |\nabla|} b^{\rm in} \in L^2$ for sufficiently small $\varepsilon$. In particular, $b^{\rm in}$ would be analytic, a contradiction.
\end{proof}

%\dacomment{would be better to use actual time inversion}

\begin{remark}
    The finite-time blow-up holds even with fractional dissipation $- \nu |\nabla|^\alpha$ on the right-hand side of the equation when $\alpha \in (0,1)$ or when $\alpha = 1$ and $\nu < q^{\rm in}$.
\end{remark}

%\da{
%Unstable manifold:

%For every ..., there exists a solution which...

%For every solution converging to..., there exists...
%}

%Also, blow-up in infinite time...?

% Can't find solutions which instantly leave, it seems, 'cuz can't make sense of equation anymore

\begin{remark}
We compare with the na{\"i}vely complexified Euler equations
\begin{equation}
\label{eq:naivecomplexEuler}
    \p_t u + u \cdot \nabla u + \nabla p = 0 \, , \quad \div u = 0 \, ,
\end{equation}
where $u$ is a complex vector field. We have conservation for $\int u^2$ and $\int \omega^2$. However, unlike the complexified Euler equations,~\eqref{eq:naivecomplexEuler} preserves the mean flow. Shear flow solutions of the form $(0,b(x))$ are steady states, whereas solutions of the form $(i,b(x))$ satisfy
\begin{equation}
    \p_t b + i \p_x b = 0 \, ,
\end{equation}
which is evidently ill-posed and exhibits finite-time loss of analyticity. Surprisingly,~\eqref{eq:naivecomplexEuler} already arose in~\cite{Caflisch1993Complex} in the context of axisymmetric vortex sheets.
\end{remark}

\section{Ill-posedness for non-hyperbolic systems}
\label{sec:nonhyperbolicsystems}

We now consider
\begin{equation}
    \label{eq:generallocalPDE}
    \p_t u + F(u,\p_x u) = 0
\end{equation}
where $u : \T \times I \to \R^m$ and $I \subset \R$ is a time interval. The nonlinearity $F : \mathbb{R}^m \times \mathbb{R}^m \to \mathbb{R}^m$ is assumed to satisfy

\begin{quote}
\textbf{(A1)}
    There exists $c \in \mathbb{R}^m$ such that $F(c,0) = 0$, 
    and $F$ is real analytic in a neighborhood of $(c,0)$.
    \end{quote}
    
 In particular,
\begin{equation}
    F = \p_u F(c,0) (u-c) + \p_p F(c,0) p - \tilde{F}(u-c,p) \, ,
\end{equation}
where $\tilde{F} = O(|u-c|^2 + |p|^2)$ is real analytic. The above assumptions can treat complex equations, e.g., the complex Burgers equation $\p_t u + u \p_x u = 0$, by embedding them into real systems.

We write $u = c + w$. Then $w$ solves the perturbed equations
\begin{equation}
    \label{eq:perturbedequation}
    \p_t w - Lw = \tilde{F}(w,\p_x w) \, ,
\end{equation}
and $L$ is the linearized operator
\begin{equation}
    - L w := \p_u F(c,0) w + \p_p F(c,0) \p_x w \, .
\end{equation}
Under the Fourier transform, we have
\begin{equation}
    L_k w_k := (L w)_k = L_0 w_k + k \bm{L} w_k \, ,
\end{equation}
where $L_0 := - \p_u F(c,0)$ and $\bm{L} := - i \p_p F(c,0)$ are complex $m \times m$ matrices.

We further assume
\begin{quote}
\textbf{(A2)} $\bm{L}$ is hyperbolic
\end{quote}
in the dynamical systems sense, namely, it has no spectrum on the imaginary axis.

%Finally, we suppose that $\bm{L}$ has unstable spectrum:
%\begin{quote}
 %   \textbf{(A3)} $\sigma(\bm{L}) \cap \{ \Re \lambda > 0 \}$ is non-empty.
%\end{quote}

The situation is less difficult when $L_0 = 0$ and $\sigma(\bm{L})$ is semi-simple, as in the complex Burgers equation~\eqref{eq:naivecomplexEuler}, but the assumptions $\textbf{(A1)}$-$\textbf{(A2)}$ seem reasonable, and we wish to treat them. In this general setting, we require some perturbation theory:

For $k \neq 0$, we have $\sigma(L_k) = k \sigma(\bm{L} + k^{-1} L_0)$. By standard finite-dimensional perturbation theory~\cite{Katobook}, the spectrum of $\sigma(\bm{L} + \varepsilon L_0)$ is `continuous' in the sense that, although Jordan blocks associated to $\bm{L}$ might split in a non-smooth way, the eigenvalues of $\bm{L} + \varepsilon L_0$ converge as $\varepsilon \to 0$ to the eigenvalues of $\bm{L}$, and the spectral projections onto sufficiently small neighborhoods of the eigenvalues of $\bm{L}$ are smooth in $\varepsilon$ when $|\varepsilon| \ll 1$. For $\mu \in \sigma(\bm{L})$, we say that the eigenvalues of $\bm{L} + \varepsilon L_0$, $|\varepsilon| \ll 1$, converging to $\mu$ belong to the \emph{eigenvalue group} $\Lambda(\mu, \varepsilon)$ of $\mu$. We call the corresponding spectral subspace $E(\mu, \varepsilon)$ the \emph{group eigenspace} of $\mu$, see~\cite[p. 67-68]{Katobook}. One simple consequence is that $L_k$ is hyperbolic for sufficiently large $|k|$. Since $\sigma(L) = \cup_{k} \sigma(L_k)$, clearly, $\sigma(L)$, where $L$ is considered on $L^2(\T)$, is discrete and consists of eigenvalues; $L$ has at most finitely many central eigenvalues, and the remaining eigenvalues are bounded away from the imaginary axis. Given $\gamma \in \R$, we define $P^u_\gamma$ to be the spectral projection onto the unstable subspace $E^u_\gamma$ corresponding to (generalized) eigenvectors with eigenvalues satisfying $\Re \lambda > \gamma$. Similarly, $P^{cs}_\gamma$ is the spectral projection onto the center-stable subspace $E^{cs}$ (eigenvalues with $\Re \lambda \leq \gamma$). These subspaces and projections are well defined, even at the level of distributions: first project onto Fourier modes and subsequently onto unstable and center-stable subspaces. Projections onto the group eigenspaces, which are well defined when $|k| \gg 1$, are uniformly bounded.

\emph{Functional set-up}. For $r,s \geq 0$, consider $A^{r,s} := \{ f \in A^r : |\nabla|^s f \in A^r \}$ with the norm $\| f \|_{A^{r,s}} = \| \langle \nabla \rangle^s f \|_{A^r}$. %We further conisder subspace of $C((-\infty,0];A^{r,1})$ with norm
%\begin{equation}
 %   \sum_k \sup_{t \in (-\infty,0]} e^{\gamma |t|+r |k|} |k|  |w_k(t)| \, \quad \left( \geq \sup_{t \in (-\infty,0]} e^{\gamma |t|} \| w \|_{A^{r,1}} \right) \, ,
%\end{equation}
We introduce the notation
\begin{equation}
    \| w \|_{\tilde{L}^\infty_\alpha(I;A^{r,s})} := \sum_k \esssup_{t \in I} \langle k \rangle^s e^{- \alpha t+r|k|} |w_k| \, ,
\end{equation}
where we emphasize that the supremum in $t$ is inside the summation in $k$. It is furthermore admissible to prescribe $r = r(t)$ a function of time, e.g., $r = \nu|t|$. We use the notation $\tilde{C}_\alpha(I;A^{r,s})$ to indicate that additionally the Fourier modes are continuous, as is automatic for solutions of the equation. $\alpha > 0$ indicates decay backward-in-time at a rate $\alpha$. 

The constants below may depend on $m, L_0, \bm{L}, F$, etc.

\begin{theorem}[Unstable manifold]
    \label{thm:unstablemanifold}
    If $0 < \gamma \leq \sfrac{m_0}{2} :=\sfrac12 \min [\Re (\sigma(L)) \cap (0,+\infty)]$, then under the assumptions \textbf{(A1)}-\textbf{(A2)}, there exist $\varepsilon_0 > 0$ and a one-to-one analytic map $\overline{B^{A^{0,1}}_{\varepsilon_0}} \cap E^u \to \tilde{C}_\gamma(\R_-;A^{0,1})$ which sends sufficiently small data $a_0 \in A^{0,1}$ in the unstable subspace $E^u$ to solutions $w$ of the PDE \eqref{eq:perturbedequation} with $P^u w(0) = a_0$. There exists $\zeta > 0$ such that solutions obey the estimates
    \begin{equation}
        \label{eq:estimates}
        \| w \|_{\tilde{L}^\infty_\gamma(\R_-;A^{\zeta |t|,1})} \lesssim \| a_0 \|_{A^{0,1}} \, .
    \end{equation}

        Conversely, every solution in $\tilde{L}^\infty_\kappa((-\infty,T);A^{0,1})$ for some $T \in \R$ and $\kappa > 0$ belongs, after a time translation, to the image of this map.

        There exists $\varepsilon_1 > 0$ such that, if $\gamma \geq \sfrac{m_0}{2}$ and $b_0 \in E^{cu}_\gamma \cap \overline{B^{A^{0,1}}_{\varepsilon_1}}$, then there exists a unique solution $w \in \tilde{C}_{\sfrac{3\gamma}{2}}(\R_-;A^{0,1})$ satisfying the asymptotics
    \begin{equation}
        \label{eq:asymptotics}
        \| w - e^{\cdot L} b_0 \|_{\tilde{L}^\infty_{\sfrac{3\gamma}{2}}(\R_-;A^{0,1})} \lesssim \| b_0 \|_{A^{0,1}}^2 \, .
    \end{equation}
\end{theorem}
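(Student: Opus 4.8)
\medskip

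The plan is a Lyapunov--Perron construction of the unstable manifold. Fix $\gamma\in(0,\sfrac{m_0}{2}]$ and note that a mild solution of \eqref{eq:perturbedequation} on $\R_-$ which decays backward in time must be a fixed point of the map
\[
\Psi(w)(t)=e^{tL}a_0+\int_0^t e^{(t-s)L}P^u\tilde F(w,\p_x w)(s)\,ds+\int_{-\infty}^t e^{(t-s)L}P^{cs}\tilde F(w,\p_x w)(s)\,ds,
\]
where the base time is $0$ on the unstable part (which forces $P^u w(0)=a_0$) and $-\infty$ on the center--stable part (whose homogeneous term vanishes since backward-decaying solutions are small in the far past). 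I would run this fixed point directly in $X:=\tilde{C}_\gamma(\R_-;A^{\zeta|t|,1})$ for a suitably small $\zeta>0$.

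\emph{Linear theory.} Using \textbf{(A2)}, the relation $\sigma(L_k)=k\,\sigma(\bm{L}+k^{-1}L_0)$, and finite-dimensional perturbation theory (the spectral picture recorded above), one shows that every unstable eigenvalue of $L_k$ has $\Re\lambda\ge m_0$ and, for $|k|$ large, $\Re\lambda\gtrsim|k|$; absorbing the Jordan polynomial factors into an arbitrarily small fraction $\eta|k|$ of the rate then gives $\|e^{\tau L_k}P^u_k\|\lesssim e^{-\rho(k)|\tau|}$ for $\tau\le0$ with $\rho(k)\ge m_0-\eta|k|$ and $\rho(k)\ge c_1|k|$ for some $c_1>0$. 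Likewise $E^{cs}=E^c\oplus E^s$, where $E^c$ is supported on finitely many modes (since $\bm{L}$ has no imaginary spectrum), $\|e^{\tau L_k}P^c_k\|$ is polynomially bounded, and $\|e^{\tau L_k}P^s_k\|\lesssim e^{-\delta(k)\tau}$ for $\tau\ge0$ with $\delta(k)\ge\delta_0>0$ and $\delta(k)\gtrsim|k|$ for $|k|$ large. Choosing $\eta,\zeta$ small relative to $c_1,\delta_0$ and the other rates---the room available being the margin $m_0-\gamma\ge\sfrac{m_0}{2}>0$---the backward contraction on $E^u$ dominates the weight $e^{-\gamma t+\zeta|t||k|}$; in particular $e^{\cdot L}a_0\in X$ with $\|e^{\cdot L}a_0\|_X\lesssim\|a_0\|_{A^{0,1}}$.

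\emph{Nonlinear estimate and fixed point.} Since $A^{r,1}$ is a Banach algebra with $A^r\into C^0$ and $\tilde F$ is a convergent power series of degree $\ge2$, one has $\|\tilde F(w,\p_x w)(s)\|_{A^{\zeta|s|}}\lesssim\|w(s)\|_{A^{\zeta|s|,1}}^2\lesssim e^{-2\gamma|s|}\|w\|_X^2$ when $\|w\|_X$ is small. The one derivative lost in $\tilde F$ is recovered \emph{mode by mode}---this is exactly why the $\tilde L^\infty$ norm keeps the $t$-supremum inside the $k$-sum: for fixed $k$, the kernels $e^{-\rho(k)|t-s|}$ (on $[t,0]$) and $e^{-\delta(k)|t-s|}$ (on $(-\infty,t]$) integrate in $s$ to $\lesssim\langle k\rangle^{-1}$, cancelling the weight $\langle k\rangle$ of $A^{\zeta|t|,1}$; the finitely many $E^c$ modes contribute harmlessly because $\tilde F$ decays at rate $2\gamma>0$, beating the polynomial growth of $e^{\tau L}P^c$; and the analyticity weight $e^{\zeta|t||k|}$ is absorbed---in the $P^u$ integral by the gained radius $c_1(|t|-|s|)\ge\zeta(|t|-|s|)$ (valid because $|s|\le|t|$ there and $\zeta<c_1$), and in the $P^{cs}$ integral by the trivial bound $e^{-\zeta|k|(|s|-|t|)}\le1$ (since $|s|\ge|t|$ there). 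This yields $\|\Psi(w)\|_X\lesssim\|a_0\|_{A^{0,1}}+\|w\|_X^2$ and $\|\Psi(w)-\Psi(w')\|_X\lesssim(\|w\|_X+\|w'\|_X)\|w-w'\|_X$, so $\Psi$ is a contraction on a small ball of $X$ once $\|a_0\|_{A^{0,1}}\le\varepsilon_0$. Its fixed point is the desired solution; it obeys \eqref{eq:estimates}, satisfies $P^u w(0)=a_0$ (whence injectivity), and $a_0\mapsto w$ is analytic because $\Psi$ is analytic in $(a_0,w)$ and a uniform contraction.

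\emph{Converse and the regime $\gamma\ge\sfrac{m_0}{2}$.} A solution in $\tilde{L}^\infty_\kappa((-\infty,T);A^{0,1})$ with $\kappa>0$ decays backward, hence (translating $T$ to $0$ and taking $\gamma=\min(\kappa,\sfrac{m_0}{2})$) solves $w=\Psi(w)$ with $a_0=P^u w(0)$; a backward time-translation shrinks its $X$-norm below $\varepsilon_0$, so by the uniqueness just shown it coincides, up to the translation, with a constructed solution. When $\gamma\ge\sfrac{m_0}{2}$ the threshold may exceed the slowest unstable rate and the manifold is no longer a graph over $E^u$; for small $b_0\in E^{cu}_\gamma$ I would instead write $w=e^{\cdot L}b_0+v$ and solve $\p_t v-Lv=\tilde F(e^{\cdot L}b_0+v,\p_x(\,\cdot\,))$ for $v$ by the same variation-of-constants scheme, now with the splitting at $\sfrac{3\gamma}{2}$; because $\tilde F$ is quadratic, its right-hand side decays backward at rate $\ge2\times(\text{rate of }b_0)\ge2\gamma\ge\sfrac{3\gamma}{2}$, so the contraction closes with $v$ quadratically small, giving \eqref{eq:asymptotics}. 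The main obstacle I anticipate is the linear theory in the analytic scale: distilling from the (generally non-smooth) perturbed Jordan structure a coherent family of semigroup bounds---backward contraction with an analyticity gain on $E^u$, forward smoothing on $E^s$, polynomial control on the finite-dimensional $E^c$---whose rates fit together \emph{uniformly in $k$} and leave room for $\gamma$ and $\zeta$, and then exploiting the $\tilde L^\infty$ (supremum-inside-the-sum) structure together with these smoothing properties so that the derivative loss in $\tilde F$ is absorbed in every Duhamel integral without a logarithmic divergence.
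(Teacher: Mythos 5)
Your construction of the unstable manifold is essentially the paper's: the same Lyapunov--Perron fixed point map (backward Duhamel on $P^u$ anchored at $t=0$, forward Duhamel on $P^{cs}$ from $-\infty$), run in the same weighted space $\tilde{C}_\gamma(\R_-;A^{\zeta|t|,1})$ with the sup-inside-the-sum structure, the derivative loss in $\tilde F$ recovered mode by mode from the $\langle k\rangle^{-1}$ gain in the exponential time integrals, and the converse obtained by showing every backward-exponentially-decaying solution satisfies the integral equation. The first two assertions of the theorem are therefore handled correctly and in the same way.

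For the third assertion (the asymptotics \eqref{eq:asymptotics}), however, your sketch skips the point on which the paper spends most of its effort. Splitting "at $\sfrac{3\gamma}{2}$" requires the spectral projections $P^u_{\sfrac{3\gamma}{2}}$, $P^{cs}_{\sfrac{3\gamma}{2}}$ to be bounded uniformly in $k$, and this can fail: for large $\gamma$ the line $\Re\lambda=\sfrac{3\gamma}{2}$ may cut through an eigenvalue group of $L_k=k(\bm{L}+k^{-1}L_0)$ for some large $k$, and projections separating eigenvalues inside a splitting Jordan block are not uniformly bounded. The paper resolves this by replacing $P^u_\gamma$ with a modified projection $\tilde P^u_\gamma$ that never splits an eigenvalue group (at the cost of the threshold moving by an $o(1)$ amount). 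A second consequence you do not address is that the center-stable block at level $\sfrac{3\gamma}{2}$ now contains genuinely \emph{unstable} "intermediate" modes, which grow forward in time and admit no smoothing estimate, so your mechanism for recovering the lost derivative (kernel decay $e^{-\delta(k)|t-s|}$ with $\delta(k)\gtrsim|k|$) does not apply to them. The paper's substitute is the observation that these intermediate modes live at wavenumbers $\langle k\rangle\lesssim\gamma$, so the factor $\gamma^{-1}$ produced by the time integral \eqref{eq:substitute} replaces the $\langle k\rangle^{-1}$ smoothing gain. You flag the uniformity of the linear theory as an anticipated obstacle, but without these two devices the contraction for $v=w-e^{\cdot L}b_0$ does not close as written.
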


% look at statements of standard unstable manifold thms to get a feel for what should be written.
%As a consequence, 
We obtain finite-time loss of analyticity from the backward-in-time smoothing estimate~\eqref{eq:estimates} when $a_0 \in A^{0,1} \setminus \cup_{s>1} A^{0,s}$. For ill-posedness below analytic regularity,~\eqref{eq:asymptotics} is more convenient, see Corollary~\ref{cor:illposednessfullynonlinear}.

$a_0$ is (the unstable projection of the) initial data. $b_0$ is scattering data. By the estimate~\eqref{eq:asymptotics}, small $b_0$ beget solutions with small $a_0$ and, therefore, satisfy~\eqref{eq:estimates}. One could also study the map $a_0 \mapsto b_0$.

\emph{Representation formula}. Suppose that $w \in L^\infty_t {\rm Lip}_x(\T \times (-\infty,T))$ is a solution to~\eqref{eq:perturbedequation} satisfying
\begin{equation}
    \label{eq:exponentiallydecaying}
    |w|,|\p_x w| = O(e^{\varepsilon t}) \text{ as } t \to -\infty
\end{equation}
for some $\varepsilon > 0$.
We write Duhamel's formula
\begin{equation}
    w(t_1) = e^{(t_1-t_0) L} w(t_0) + \int_{t_0}^{t_1} e^{(t_1-s)L} \tilde{F}(w,\p_x w)(s) \, ds
\end{equation}
which is unequivocally defined mode-by-mode in $x$, for $t_0 < t_1 < T$, and we suppress the $x$-dependence of functions in the notation. First, we project onto the center-stable subspace $E^{cs}$:
\begin{equation}
    P^{cs} w(t_1) = e^{(t_1-t_0) L} P^{cs} w(t_0) + \int_{t_0}^{t_1} e^{(t_1-s)L} P^{cs} \tilde{F}(w,\p_x w)(s) \, ds \, .
\end{equation}
By the assumption~\eqref{eq:exponentiallydecaying}, and because we are applying the semigroup forward-in-time, we obtain the following formula when $t_0 \to -\infty$ and $t_1 = t$:
\begin{equation}
    \label{eq:Duhamelforward}
    P^{cs} w(t) = \int_{-\infty}^{t} e^{(t-s)L} P^{cs} \tilde{F}(w,\p_x w)(s) \, ds \, .
\end{equation}
Meanwhile, we apply $e^{(t_0-t_1) L} P^u$ to~\eqref{eq:perturbedequation}, write $t_0 = t$, and rearrange:
\begin{equation}
    \label{eq:Duhamelbackward}
    P^u w(t)  =  e^{(t-t_1)L } P^u w(t_1) - \int_t^{t_1} e^{(t-s)L} P^u \tilde{F}(w,\p_x w)(s) \,ds \, .
\end{equation}
%(This is Duhamel's formula for evolving backward.)
We sum~\eqref{eq:Duhamelforward} and~\eqref{eq:Duhamelbackward} to obtain the representation formula
\begin{align}
    \label{eq:representationformula}
    w(t) &= e^{(t-t_1)L } P^u w(t_1) \nonumber\\
    &\quad - \int_t^{t_1} e^{(t-s)L} P^u \tilde{F}(w,\p_x w)(s) \,ds \!+\! \int_{-\infty}^{t} e^{(t-s)L} P^{cs} \tilde{F}(w,\p_x w)(s) \, ds \, .
\end{align}

We now refine our functional set-up. We have the algebra property
\begin{equation}
    \| fg \|_{\tilde{L}^\infty_\gamma(I;A^r)} \leq \| f \|_{\tilde{L}^\infty_{\gamma_1}(I;A^r)} \| g \|_{\tilde{L}^\infty_{\gamma_2}(I;A^r)} \, ,
\end{equation}
where $\gamma \leq \gamma_1 + \gamma_2$ and $r = r(t)$ is possibly time-dependent.
Therefore, we have the following composition estimate
\begin{equation}
	\label{eq:Fcompositionestimate}
\begin{aligned}
    \| F(u,p) \|_{\tilde{L}^\infty_\gamma(I;A^r)}  \leq \sum_{\alpha,\beta} \frac{|\p^\alpha_u \p^\beta_p F(0,0)|}{\alpha!\beta!} \| u \|_{\tilde{L}^\infty_\gamma(I;A^r)}^{|\alpha|}   \| p \|_{\tilde{L}^\infty_\gamma(I;A^r)}^{|\beta|} \, ,
    %=: \mathbf{F}(\|u\|_{\tilde{L}^\infty_\gamma(I;A^r)} , \|p\|_{\tilde{L}^\infty_\gamma(I;A^r)} ) \, ,
    \end{aligned}
\end{equation}
where $\alpha,\beta$ are multi-indices.

%Suppose we have the following growth estimates:
%\begin{equation}
    %|e^{L_k t} (P^u w)_k| \leq C e^{\gamma t} \, , \quad t \leq 0 \, ,
%\end{equation}
%\begin{equation}
    %|e^{L_k t} (P^{cs} w)_k| \leq C e^{\gamma t} \, , \quad t \geq 0 \, .
%\end{equation}
%and $\delta = 0$ provided that the central eigenvalues are semi-simple.
\emph{Linear estimates}. We have the smoothing and decay estimates
\begin{align}
    \label{eq:backwardsmoothing}
    |e^{L_k t} (P^u w)_k| &\lesssim e^{(\gamma + \nu|k|)t} \, , \quad t \leq 0 \, , \\
    \label{eq:forwardsmoothing}
    |e^{L_k t} (P^{cs} w)_k| &\lesssim_\gamma e^{(\gamma - \nu |k|)t} \, , \quad t \geq 0 \, ,
\end{align}
where $0 < \gamma \leq \sfrac{3m_0}{4}$, and $\nu > 0$ is sufficiently small depending on $\gamma$. $\nu$ and the constant in~\eqref{eq:forwardsmoothing} are uniform when $\gamma$ is away from zero.

We may deduce mapping properties from exponential integrals:
%\begin{equation}
    \begin{align}
    \notag
    &e^{(b + \zeta |k|)|t|} \left| \int_{-\infty}^t e^{(d - \nu |k|)(t-s)} f_k(s) \, ds \right| \\
    \label{eq:forwardevolution}
    & \leq [(b-d) + (\nu - \zeta)|k|]^{-1} \| e^{(b + \zeta|k|) |\cdot|} f_k \|_{L^\infty(-\infty,t)} \, , \quad d < b \, , \\
    %\end{aligned}
%\end{equation}
%\begin{equation}
    %\begin{aligned}
    \notag
	&e^{(b + \zeta |k|)|t|} \left| \int_{t}^0 e^{(d + \nu |k|)(t-s)} f_k(s) \, ds \right| \\
	\label{eq:backwardevolution}
    &  \leq [(d-b) + (\nu - \zeta)|k|]^{-1} \|  e^{(b + \zeta|k|) |\cdot|} f_k \|_{L^\infty(t,0)} \, , \quad d > b \, ,
    \end{align}
%\end{equation}
\begin{comment}
\begin{equation}
    \label{eq:forwardevolution}
    \langle k \rangle e^{(\gamma + \zeta |k|)t} \left| \int_{-\infty}^t e^{(\gamma - \delta - \nu |k|)(t-s)} f_k(s) \, ds \right| \les_\delta (\nu-\zeta)^{-1} \|  e^{(\gamma + \zeta|k|) \cdot} f_k \|_{L^\infty(-\infty,t)}
\end{equation}
\begin{equation}
    \label{eq:backwardevolution}
    \langle k \rangle e^{(\gamma + \zeta |k|)t} \left| \int_{t}^0 e^{(\gamma + \delta + \nu |k|)(t-s)} f_k(s) \, ds \right| \les_\delta (\nu-\zeta)^{-1} \|  e^{(\gamma + \zeta|k|) \cdot} f_k \|_{L^\infty(t,0)} \, ,
\end{equation}
\end{comment}
where $t \leq 0$. Here, $\zeta \in [0,\sfrac{\nu}{2}]$ is the rate at which the analyticity radius grows, and we typically choose $\zeta = \sfrac{\nu}{2}$ ($\zeta = 0$ for the uniqueness statement).

%Need to be careful to spectral gaps. There may be arbitrarily small gaps at high frequencies if the unstable eigenvalues the ratios of unstable eigenvalues are not rational multiples of each other. \da{Think about how to set this bullshit up.}

%\begin{equation}
 %   |e^{L_k t} (P^{u,cs} w)_k| \lesssim e^{-\nu |k| |t|} |(P^{u,cs} w)_k| \, , \quad t \lessgtr 0 \, , \quad |t| \leq 1 \, .
%\end{equation}
%(Notably, the forward center-stable evolution may not have long-time smoothing without a growing prefactor; for example, the $k=1$ mode could have a zero eigenvalue due to $L_0 \neq 0$.)

Let $a_0 \in A^{0,1} \cap E^u$. We wish to solve
\begin{align}
w(t) = \Phi[w](t) &:= e^{ tL } a_0 - \int_t^{0} e^{(t-s)L} P^u_\gamma \tilde{F}(w,\p_x w)(s) \,ds \nonumber\\
&\quad + \int_{-\infty}^{t} e^{(t-s)L} P^{cs}_\gamma \tilde{F}(w,\p_x w)(s) \, ds
\end{align}
via contraction mapping argument.
Evidently, $\| e^{tL} a_0 \|_{\tilde{L}^\infty_\gamma(\R_-;A^{\nu|t|,1})} \lesssim \| a_0 \|_{A^{0,1}}$ due to~\eqref{eq:backwardsmoothing} when $0 < \gamma \leq \sfrac{m_0}{2}$.

\emph{$\Phi$ stabilizes a ball}. The estimates~\eqref{eq:forwardevolution} and~\eqref{eq:backwardevolution} with $b = \gamma$ and $d = (1 \mp \sfrac{9}{10}) \gamma$ yield
\begin{align}
  &  \| \int_{-\infty}^t e^{(t-s) L} P^{cs} \tilde{F}(w,\p_x w)(s) \, ds \|_{\tilde{L}^\infty_\gamma(\R_-;A^{\zeta|t|,1})} \les_\gamma \| w \|_{\tilde{L}^\infty_\gamma(\R_-;A^{\zeta|t|,1})}^2
\\
  &  \| \int_t^{0} e^{(t-s) L} P^u \tilde{F}(w,\p_x w)(s) \,ds \|_{\tilde{L}^\infty_\gamma(\R_-;A^{\zeta|t|,1})} \les \| w \|_{\tilde{L}^\infty_\gamma(\R_-;A^{\zeta|t|,1})}^2 \, ,
\end{align}
provided that $\| w \|_{\tilde{L}^\infty_\gamma(\R_-;A^{\zeta|t|,1})}$ is less than, say, $\sfrac{1}{4}$ the radius of analyticity of $\tilde{F}$ (see the composition estimate~\eqref{eq:Fcompositionestimate}).
%Hence, $\| \Phi \|_{\tilde{L}^\infty_\gamma(\R_-;A^{r,1})} \leq C_{\rm stab} (\varepsilon + M^2)$.

\emph{$\Phi$ is contractive}. For the contraction estimate, we subtract $\Phi(w) - \Phi(v)$ and estimate similarly:
\begin{align}
    &\| \int_{-\infty}^t e^{(t-s) L} P^{cs} (\tilde{F}(w,\p_x w) - \tilde{F}(v,\p_x v))(s) \, ds \|_{\tilde{L}^\infty_\gamma(\R_-;A^{\zeta|t|,1})} \nonumber\\
    & \lesssim_\gamma \| w \|_{\tilde{L}^\infty_\gamma(\R_-;A^{\zeta|t|,1})}  \| w - v \|_{\tilde{L}^\infty_\gamma(\R_-;A^{\zeta|t|,1})} 
\\
    &\| \int_t^{0} e^{(t-s) L} P^u (\tilde{F}(w,\p_x w) - \tilde{F}(v,\p_x v))(s) \,ds \|_{\tilde{L}^\infty_\gamma(\R_-;A^{\zeta|t|,1})} \nonumber\\
    & \lesssim \| w \|_{\tilde{L}^\infty_\gamma(\R_-;A^{\zeta|t|,1})}  \| w - v \|_{\tilde{L}^\infty_\gamma(\R_-;A^{\zeta|t|,1})} \, ,
\end{align}
where we apply a composition estimate similar to~\eqref{eq:Fcompositionestimate} but Taylor expanding $\tilde{F}$ around $(w,\p_x w)$ instead of the origin.

In conclusion, the contraction mapping theorem produces a unique small solution which, in particular, satisfies the estimates~\eqref{eq:estimates} when $\gamma=\sfrac{m_0}{2}$. Uniqueness for general exponentially decaying solutions follows when we choose $\zeta = 0$ and $\gamma$ small. % (in which case the ball is chosen small depending on $\gamma$).

\begin{remark}[Analytic dependence]
A variation on the above argument will produce Lipschitz dependence on $a_0$. However, one could apply the implicit function theorem with analytic parameter dependence, see~\cite[p. 5-6]{kielhofer2011bifurcation}. We seek solutions to $\bm{F} = 0$, where
 \begin{align}
 	\label{eq:Fanalytic}
 	&\bm{F} : (a_0,w) \mapsto - w + e^{ tL } a_0 - \int_t^{0} e^{(t-s)L} P^u_\gamma \tilde{F}(w,\p_x w)(s) \, ds  \nonumber\\
 	&\quad\quad\quad\quad\quad\quad+ \int_{-\infty}^{t} e^{(t-s)L} P^{cs}_\gamma \tilde{F}(w,\p_x w)(s) \, ds \, .
 \end{align}
 Since trivially $\p_w \bm{F} = - {\rm Id}$, the goal is to prove that $\bm{F}$ is jointly analytic in a small ball in $(A^{0,1} \cap E^u) \times \tilde{L}^\infty_\gamma(\R_-;A^{\zeta |t|,1})$. For this, one can justify the following power series representation in $\tilde{L}^\infty_\gamma(\R_-;A^{\zeta |t|,1})$: 
 \begin{align}
&\int_t^{0} e^{(t-s)L} P^u_\gamma \tilde{F}(w,\p_x w)(s) \, ds \nonumber\\
 & = \sum_{\alpha,\beta} \frac{1}{\alpha! \beta!} \int_t^{0} e^{(t-s)L} P^u_\gamma [(\p_u^\alpha \p^\beta_p \tilde{F})(0,0) w^\alpha (\p_x w)^\beta](s) \, ds \, ,
 \end{align}
 and similarly for the center-stable evolution. We omit the remaining details.
\end{remark}

\textit{Leading order asymptotics}. We begin with comments on the projections $P^u_\gamma$ and $P^{cs}_\gamma$. Let $\mu \in \sigma(\bm{L})$. If $\mu$ is associated to a Jordan block which splits under perturbation by $\varepsilon L_0$, then there is no guarantee that $P^u_\gamma$ and $P^{cs}_\gamma$ are well behaved, i.e., uniformly bounded, when $\gamma = k\mu$, $|k| \gg 1$. Therefore, we define a modified spectral projection $\tilde{P}_\gamma$, which does not split the eigenvalue groups, in the following way. Let $\Pi_k$ be the projection onto the $k^{\rm th}$ Fourier mode (which we identify with its Fourier coefficient when convenient). %Let $M = \max \Re \sigma{\bm{L}}$ and $m = \min \Re \sigma{\bm{L}}$.\dacomment{Do we need $m$ and $M$?}
We consider $k_0$ sufficiently large such that the concept of eigenvalue group of $\bm{L} + k^{-1} L_0$ associated to each $\mu \in \sigma(\bm{L})$ is well defined when $|k| \geq k_0$. It is only necessary to explain how the projection acts on each group eigenspace $E(\mu,k^{-1})$, since the projections onto Fourier modes and group eigenspaces are uniformly bounded. Suppose that $\lambda \in k \Lambda  ( \mu , k^{-1})$ is an eigenvalue. When $|\lambda| \gg 1$, it is necessarily of the form $\lambda = k (\mu + o_{|k| \to +\infty}(1))$, where $|k| \geq k_0$ after possibly increasing $k_0$. Suppose that $\Re \lambda > \gamma$ and there exists $\lambda' = k (\mu + o_{|k| \to +\infty}(1)) \in k\Lambda(\mu, k^{-1})$ but with $\Re \lambda' < \gamma$. That is, suppose that the eigenvalue group $k \Lambda(\mu,k^{-1})$ is split across the line $\Re \lambda = \gamma$. In this case, we say that $\tilde{P}^u_\gamma \Pi_k \varphi =0 $ whenever $\Pi_k \varphi \in E ( \mu, k^{-1} ) $. % \jocomment{I changed this line. Does it make sense now?} 
Necessarily, the above situation occurs only when $\gamma = k (\mu + o(1))$ for some large $k$. In particular, $\tilde{P}^u_\gamma$ is a spectral projection onto certain eigenspaces corresponding to eigenvalues with real part \emph{at least} $\gamma$, and $\tilde{P}^{cs}_\gamma = I - \tilde{P}^u_\gamma$ is a spectral projection onto certain eigenspaces corresponding to eigenvalues with real part \emph{at most} $(1+o(1)) \gamma$, where the $o(1)$ factor is positive and, say, $\leq 1/200$. Otherwise, when $\gamma$ is not large, we simply define $\tilde{P}^u_\gamma = P^u_\gamma$ and $\tilde{P}^{cs}_\gamma = I - \tilde{P}^u_\gamma$. %, etc.

%$j\mu_i + o(1) \leq \theta \lambda$, then

%$j\mu_i + o(1))

%However, the spectral projections are well behaved away from this scenario. To this end, choose $\theta \in [3/4,1)$ satisfying the property that $\theta \Re \sigma(\bm{L}) \cap \Re \sigma(\bm{L}) = \emptyset$. This choice, combined with the previous finite-dimensional perturbation theory, yields that the projections $P^u_{\theta}$, $P^{cs}_\theta$, are uniformly bounded. If $\lambda \in \sigma L$ with $|\lambda| \gg 1$, then necessarily $\lambda = k (\mu + o(1))$ for $\mu \in \sigma{\bm{L}}$ and $|k| \gg 1$. Suppose $\tilde{\lambda} \in \sigma L$ satisfies $|\lambda - \tilde{\lambda}| \leq \delta |k|$. Then $|k (\mu+o(1)) - \tilde{k}(\tilde{\mu}+o(1))| \leq \delta |k|$. Then $\mu+o(1) - |k|^{-1}\tilde{k} \tilde(\mu+o(1)) \leq \delta$.

Let $\gamma \geq \sfrac{m_0}{2}$ and suppose that $b_0 \in E^{cu}_\gamma$. In particular, by the assumptions,
\begin{equation}
\| e^{tL} b_0 \|_{\tilde{L}^\infty_{\sfrac{9\gamma}{10}}(\R_-;A^{0,1})} \les \| a_0 \|_{A^{0,1}} \, .
\end{equation}

We have the new decay and growth estimates
\begin{align}
    \label{eq:decayestimatetilde}
   & |e^{L_k t} (\tilde{P}^u_{(\sfrac{3}{2} + 3 \delta) \gamma} w)_k| \lesssim e^{(\sfrac{3}{2} + 2 \delta) \gamma t} |(\tilde{P}^u_{(\sfrac{3}{2} + 3 \delta) \gamma } w)_k| \, , \quad t \leq 0 \, ,
\\
    \label{eq:intermediatemodes}
   & |e^{L_k t} ((\tilde{P}^{cs}_{(\sfrac{3}{2} + 3 \delta) \gamma } - P^{cs} w)_k| \lesssim e^{(\sfrac{3}{2} + 4 \delta)  \gamma t} | ((\tilde{P}^{cs}_{(\sfrac{3}{2} + 3 \delta) \gamma } - P^{cs}) w)_k| \, , \quad t \geq 0 \, ,
\end{align}
where $\delta = \sfrac{1}{100}$. Notably, the estimate~\eqref{eq:intermediatemodes} controls the growth forward-in-time of the `intermediate modes'. We can interpolate the estimate~\eqref{eq:decayestimatetilde} with the smoothing estimate~\eqref{eq:backwardsmoothing} to obtain
\begin{equation}
    \label{eq:smoothingestimatetilde}
    |e^{L_k t} (\tilde{P}^u_{(\sfrac{3}{2} + 3 \delta) \gamma} w)_k| \lesssim e^{(\sfrac{3}{2} + \delta) \gamma t + \nu'|k| t} |(\tilde{P}^u_{(\sfrac{3}{2} + 3 \delta) \gamma} w)_k| \, , \quad t \leq 0 \, ,
\end{equation}
for some $\nu' > 0$ independent of $\gamma$. Notice that we do not derive a smoothing estimate for the intermediate modes. Rather, we exploit that
\begin{equation}
	\label{eq:substitute}
e^{\sfrac{9\gamma}{5} |t|} \left|\int_{-\infty}^t e^{(\sfrac{3}{2} + 4 \delta) \gamma(t-s)} f_k(s) \, ds \right| \lesssim \gamma^{-1} \| e^{\sfrac{9\gamma}{5} |\cdot| } f_k \|_{L^\infty(-\infty,t)} \, ,
\end{equation}
and the wavenumbers associated to the intermediate modes satisfy $\langle k \rangle \lesssim \gamma$. Hence,\\ \eqref{eq:substitute} will substitute for the smoothing estimate for intermediate modes. The center-stable directions will be estimated in the same way as previously.

We seek solutions $w$ to the integral equation
\begin{align}
&w(t) = e^{ tL } b_0 - \int_t^{0} e^{(t-s)L} \tilde{P}^u_{(\sfrac{3}{2} + 3 \delta) \gamma} \tilde{F}(w,\p_x w)(s) \,ds \nonumber\\
&\quad\quad\quad\quad\quad + \int_{-\infty}^{t} e^{(t-s)L} \tilde{P}^{cs}_{(\sfrac{3}{2} + 3 \delta) \gamma} \tilde{F}(w,\p_x w)(s) \, ds \, .
\end{align}
If we define $v := w - e^{ \cdot L } b_0$, we can more directly solve
\begin{align}
v(t) &= \Phi(v) := - \int_t^{0} e^{(t-s)L} \tilde{P}^u_{(\sfrac{3}{2} + 3 \delta) \gamma} \tilde{F}(v+e^{\cdot L} b_0,\p_x (v+e^{\cdot L} b_0))(s) \,ds \nonumber\\
&\quad + \int_{-\infty}^{t} e^{(t-s)L} \tilde{P}^{cs}_{(\sfrac{3}{2} + 3 \delta) \gamma} \tilde{F}(v+e^{\cdot L} b_0,\p_x (v+e^{\cdot L} b_0))(s) \, ds
\end{align}
in the function space $\tilde{C}_{\sfrac{3}{2}\gamma}(\R_-;A^{0,1})$. We demonstrate how to prove that $\Phi$ stabilizes a ball; it is not much more difficult to establish that $\Phi$ is a contraction. We estimate the modes in three cases, beginning with the strongly unstable modes:
\begin{align}
&\| \int_t^{0} e^{(t-s)L} \tilde{P}^u_{(\sfrac{3}{2} + 3 \delta) \gamma} \tilde{F}(v+e^{\cdot L} b_0,\p_x (v+e^{\cdot L} b_0))(s) \,ds \|_{\tilde{L}^\infty_{\sfrac{3}{2}\gamma}(\R_-;A^{0,1})} \nonumber\\
& \lesssim \| \tilde{F}(v+e^{\cdot L} b_0,\p_x (v+e^{\cdot L} b_0)) \|_{\tilde{L}^\infty_{\sfrac{9\gamma}{5}}(\R_-;A^{0,1})} \nonumber\\
& \lesssim \| e^{\cdot L} b_0 \|_{\tilde{L}^\infty_{\sfrac{9\gamma}{10}}(\R_-;A^{0,1})}^2 + \| v \|_{\tilde{L}^\infty_{\sfrac{3\gamma}{2}}(\R_-;A^{0,1})}^2 \, .
\end{align}
The main tool is the smoothing estimate~\eqref{eq:smoothingestimatetilde} in conjunction with~\eqref{eq:backwardevolution}. Next, we estimate the intermediate modes
\begin{align}
&\| \int_{-\infty}^{t} e^{(t-s)L} (\tilde{P}^{cs}_{(\sfrac{3}{2} + 3\delta)\gamma} - P^{cs}) \tilde{F}(v+e^{\cdot L} b_0,\p_x (v+e^{\cdot L} b_0))(s) \, ds \|_{\tilde{L}^\infty_{\sfrac{3\gamma}{2}}(\R_-;A^{0,1})} \nonumber\\
& \lesssim \| e^{\cdot L} b_0 \|_{\tilde{L}^\infty_{\sfrac{9\gamma}{10}}(\R_-;A^{0,1})}^2 + \| v \|_{\tilde{L}^\infty_{\sfrac{3\gamma}{2}}(\R_-;A^{0,1})}^2 \, .
\end{align}
The main tool is the forward-in-time growth estimate \!\eqref{eq:intermediatemodes} in conjunction with \!\eqref{eq:substitute}. Finally, we estimate the contribution of the center-stable modes,
\begin{equation}
\int_{-\infty}^{t} e^{(t-s)L} P^{cs} \tilde{F}(v+e^{\cdot L} b_0,\p_x (v+e^{\cdot L} b_0))(s) \, ds \, ,
\end{equation}
in the same way as before. We omit the remaining details.

This completes the proof of~\eqref{eq:asymptotics} and Theorem~\ref{thm:unstablemanifold}.

%\begin{remark} It is possible to close existence at finite regularity, so it is not necessary to require that $F$ is analytic (but it is necessary to get the ill-posedness to control high frequencies for long enough times); $C^{1,\alpha}$ should suffice.
%\end{remark}

\begin{corollary}[Ill-posedness]
\label{cor:illposednessfullynonlinear}
If $s>1$, then
for any $t< 0$ and $M>0$, there exists a sequence $\{a_0^{(n)} \}$ of unstable data such that $\| a_0^{(n)} \| _{A^{0,1}} \leq \varepsilon_0$, $\| a_0^{(n)} \| _{H^s} \geq M$, and the corresponding solutions $w^{(n)}$ of the PDE \eqref{eq:perturbedequation} satisfy 
\begin{equation} 
\| w^{(n)} ( \cdot, t ) \| _{H^s} \to 0  \text{ as } n \to +\infty \, .
\end{equation} 
\end{corollary}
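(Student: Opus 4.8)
\emph{Proof plan.} The idea is to concentrate the data in a single, very high, unstable Fourier mode and to exploit the backward-in-time smoothing built into~\eqref{eq:estimates}: at any fixed negative time the resulting solution is analytic with a \emph{fixed} positive radius $\zeta|t|$, so its $H^s$ norm is controlled by an $(s,t)$-dependent constant times its $A^{0,1}$ norm, which we will make tend to zero. At the same time the $H^s$ norm of the data itself can blow up, because $s>1$ and the $A^{0,1}$ norm weights a single frequency $k$ only by $\langle k\rangle$, whereas $H^s$ weights it by $\langle k\rangle^s$.

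\emph{Step 1: unstable modes at arbitrarily large frequency.} Since $\bm{L}$ is hyperbolic by \textbf{(A2)}, $\sigma(\bm{L})\cap i\R=\emptyset$. If $L_k$ had no eigenvalue with positive real part for all large $|k|$, then, passing to the limits $k\to+\infty$ and $k\to-\infty$ in $\sigma(L_k)=k\,\sigma(\bm{L}+k^{-1}L_0)$ and using the continuity of the spectrum recalled above, we would get $\sigma(\bm{L})\subset i\R$, a contradiction. Hence there is a sequence $|k_n|\to+\infty$ and unit eigenvectors $v_n$ with $L_{k_n}v_n=\lambda_n v_n$, $\Re\lambda_n>0$; as the unstable spectrum of $L$ is bounded away from $i\R$, necessarily $\Re\lambda_n\geq m_0>\gamma$, so $v_ne^{ik_nx}\in E^u$. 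Since $L_0$ is real and $\bm{L}=-\overline{\bm{L}}$ (because $\p_pF(c,0)$ is real), one has $\overline{L_k\varphi}=L_{-k}\bar\varphi$, so $\bar v_ne^{-ik_nx}$ is an unstable eigenfunction at Fourier mode $-k_n$ and the real vector field $\Re(v_ne^{ik_nx})$ lies in $E^u$.

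\emph{Step 2: choice of data and application of the unstable manifold theorem.} Fix $t_0<0$ and $M>0$. Set $\delta_n:=\langle k_n\rangle^{-(s-1)/2}$ and
\begin{equation}
a_0^{(n)}:=\frac{\delta_n}{\langle k_n\rangle}\,\Re\!\big(v_ne^{ik_nx}\big)\in E^u .
\end{equation}
A direct computation gives $\|a_0^{(n)}\|_{A^{0,1}}=\delta_n\to0$ and $\|a_0^{(n)}\|_{H^s}=\tfrac{1}{\sqrt2}\,\langle k_n\rangle^{s-1}\delta_n=\tfrac{1}{\sqrt2}\,\langle k_n\rangle^{(s-1)/2}\to+\infty$. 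Discarding finitely many indices, we may assume $\delta_n\leq\varepsilon_0$ and $\|a_0^{(n)}\|_{H^s}\geq M$ for every $n$. By Theorem~\ref{thm:unstablemanifold} with, say, $\gamma=\sfrac{m_0}{2}$, each $a_0^{(n)}$ is mapped to a solution $w^{(n)}$ of~\eqref{eq:perturbedequation} with $P^uw^{(n)}(0)=a_0^{(n)}$ and $\|w^{(n)}\|_{\tilde{L}^\infty_\gamma(\R_-;A^{\zeta|t|,1})}\lesssim\delta_n$.

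\emph{Step 3: evaluation at $t_0$ and conclusion.} Isolating the single time $t=t_0$ inside the $\tilde{L}^\infty_\gamma$ norm and using $e^{\gamma t_0}\leq1$ yields $\|w^{(n)}(\cdot,t_0)\|_{A^{\zeta|t_0|,1}}=\sum_k\langle k\rangle e^{\zeta|t_0||k|}|w^{(n)}_k(t_0)|\lesssim\delta_n$. Since $\zeta|t_0|>0$ is a fixed positive number, $\sup_k\langle k\rangle^{s-1}e^{-\zeta|t_0||k|}<\infty$, so $A^{\zeta|t_0|,1}\hookrightarrow H^s$ with a constant depending only on $s$ and $t_0$; hence
\begin{equation}
\|w^{(n)}(\cdot,t_0)\|_{H^s}\lesssim_{s,t_0}\|w^{(n)}(\cdot,t_0)\|_{A^{\zeta|t_0|,1}}\lesssim_{s,t_0}\delta_n\to0,
\end{equation}
which is the assertion.

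\emph{Main obstacle.} The one non-routine point is Step 1 — deducing from hyperbolicity of $\bm{L}$ that $L$ has unstable eigenvalues at arbitrarily high frequency, and checking the conjugation structure so that the real single-mode datum genuinely lies in $E^u$. Everything else is bookkeeping; the crux is simply that at a fixed negative time the analyticity radius $\zeta|t_0|$ is a fixed positive constant, so that \emph{any} Sobolev norm of the solution at that time is dominated by its $A^{\zeta|t_0|,1}$ norm, which estimate~\eqref{eq:estimates} makes $O(\delta_n)$ while the $H^s$ norm of the data is free to diverge.
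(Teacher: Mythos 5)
Your argument is correct, but it runs along a genuinely different track from the paper's. You invoke only the first part of Theorem~\ref{thm:unstablemanifold}: you prescribe $a_0^{(n)}$ directly as a single real high-frequency unstable mode, compute its $A^{0,1}$ and $H^s$ norms exactly, and then, at the fixed time $t_0<0$, use the fact that the analyticity radius $\zeta|t_0|$ in~\eqref{eq:estimates} is a fixed positive constant to dominate $\|w^{(n)}(\cdot,t_0)\|_{H^s}$ by $\|a_0^{(n)}\|_{A^{0,1}}=\delta_n\to0$. The paper instead goes through the second part of the theorem: it prescribes the \emph{scattering data} $b_0^{(n)}$ as a single eigenfunction normalized by $\|b_0^{(n)}\|_{H^s}=2M$, defines $a_0^{(n)}:=P^uw^{(n)}(0)$ a posteriori, verifies $\|a_0^{(n)}\|_{H^s}\geq M$ indirectly from~\eqref{eq:asymptotics} at $t=0$ (using that $b_0^{(n)}$ is a single Fourier mode), and obtains the decay of $\|w^{(n)}(\cdot,t)\|_{H^s}$ by comparing with the linear evolution $e^{tL}b_0^{(n)}$, whose $H^s$ norm decays like $e^{\Re\lambda_n t}$, plus an error controlled by combining~\eqref{eq:asymptotics} with~\eqref{eq:estimates}. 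The paper's route buys the leading-order description of the solutions (they are asymptotic to a single linear mode), at the price of only indirect control of $a_0^{(n)}$; your route is more elementary, gives exact control of the data, and isolates the real mechanism (backward-in-time analytic smoothing at a fixed radius) more transparently. Your Step 1 — extracting unstable eigenvalues at arbitrarily large $|k|$ from hyperbolicity of $\bm{L}$, with the sign of $k$ chosen according to the sign of $\Re\mu$ — is the same observation the paper makes at the start of its proof, and your additional check of the conjugation symmetry $\overline{L_k\varphi}=L_{-k}\bar\varphi$, needed so that the real datum $\Re(v_ne^{ik_nx})$ genuinely lies in $E^u$, is a detail the paper leaves implicit; it is worth including.
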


\begin{proof} 
Fix $\mu \in \sigma (\bm{L})$. Assume that $\Re \mu >0$; otherwise, replace $n$ with $-n$ below. We consider a sequence of eigenvalues $\lambda_n =  n ( \mu + o_{n \to +\infty}(1))$ and a corresponding sequence of eigenfunctions $b_0^{(n)}$ such that
\begin{equation}
1 \lesssim \| b_0^{(n)} \|_{A^{0,1} } \leq \varepsilon_1 \, , \quad \| b_0^{(n)} \| _{ H^s} = 2M \, ,
\end{equation}
and $\{ w^{(n)} \}$ is the corresponding sequence of solutions with scattering data $b_0^{(n)}$ guaranteed by Theorem~\ref{thm:unstablemanifold}. We choose $b_0^{(n)}$ sufficiently small to guarantee that each $a_0^{(n)} := P^u w^{(n)}(0)$ satisfies $1 \lesssim \| a_0^{(n)} \|_{A^{0,1}} \leq \varepsilon_0$. In particular, \emph{since $b_0^{(n)}$ consists of a single Fourier mode}, we can ensure via~\eqref{eq:asymptotics} at $t=0$ that $\| a_0^{(n)} \|_{H^s} \geq M$.

We have the linear estimate 
\begin{equation} 
    \label{eq:thingtoovsert}
\| e^{t L } b_0^{(n)} \| _{H^s } \leq C  e^{\lambda_n t } \| b_0^{(n)} \| _{H^s} \, .
\end{equation} 
From \eqref{eq:asymptotics}, we have 
\begin{equation}  \sum \langle k \rangle | \widehat{w^{(n)}} ( k,t) - \widehat{ e^{t L } b_0^{(n)} } ( k , t ) | \leq C \varepsilon_1^2 e^{  \frac{ 3 \lambda_n t}{2}}. \end{equation} 
We then estimate
\begin{align} 
\| w^n (\cdot, t ) - e^{tL} b_0^{(n)} \|^2_{H^s} & = \sum \langle k \rangle  ^{2s} | \widehat{w^{(n)}} ( k,t) - \widehat { e^{t L } b_0^{(n)}} ( k , t ) |^2
\nonumber \\ &  \leq C \varepsilon_1^2 e^{\frac{ 3 \lambda_n t }{2}} \sum \langle k \rangle ^{2s-1 } ( | \widehat{w^{(n)}} ( k , t ) | +  |\widehat { e^{t L } b_0^{(n)}} ( k , t )| ) \nonumber\\
& \leq  C\varepsilon_1^2 e^{ \frac{ 3 \lambda_n t }{2}} 
 \sum e^{\zeta |t| k } (  | \widehat{w^{(n)}} ( k , t ) | +  |\widehat { e^{t L } b_0^{(n)}} ( k , t )| ) 
\nonumber\\ &  \leq C \varepsilon_0 \varepsilon_1^2 e^{\frac {m_0t}{2}  } e^{ \frac{ 3 \lambda_n t }{2}} \, ,
 \end{align} 
 where the last inequality follows from \eqref{eq:estimates}. Therefore,
 \begin{equation} 
 \| w^{(n)} ( \cdot, t ) \|  _{ H^s } \leq  \| w^{(n)} (\cdot, t ) - e^{tL} b_0^{(n)} \|_{H^s} + \| e^{t L } b_0^{(n)} \| _{H^s } \to 0 \text{ as } n \to +\infty \, .
 \end{equation} 
 \end{proof}

%%%%%%%%%%%%%%%%%%%%%%%%%%%%%%%%%%%%%%%%%%%%%%%%%%%%%%
%          7. REFERENCES SECTION
%%%%%%%%%%%%%%%%%%%%%%%%%%%%%%%%%%%%%%%%%%%%%%%%%%%%%%

%       READ THIS SECTION CAREFULLY

% Each of the references below MUST be cited in your article above. Do not include references that are not cited in your article.

% Follow the examples below carefully. We strongly suggest that you copy and paste your reference information directly into our examples.

% List all references in alphabetical order according to the first author's last name.

% Verify each URL works correctly and can be accessed properly. Your URL links should be to reputable websites. The command line for a website link begins with: \url{ }

% Do not add MR or DOI numbers to your references. AIMS production staff will add this information.

% Using BibTex is not recommended but can be handled.

%\bibliography{bibliography}
%\bibliographystyle{plain}

\medskip
% The information below will be filled in by AIMS production staff.
Received for publication July 2023; early access October 2023.
\medskip

\end{document}